\newcounter{nootje}
\renewcommand\check[1] 
\newcommand\QQ{\mathbb{Q}}
\newcommand\sL{\mathcal{L}}
\newcommand\gV{\mathcal{V}}
\newcommand\OO{\mathcal{O}}
\newcommand\PP{\mathbb{P}}
\renewcommand\AA{\mathbb{A}}
\newcommand\RR{\mathbb{R}}
\newcommand\ZZ{\mathbb{Z}}
\newcommand\kbar{{\overline{k}}}
\newcommand\isom{\cong}
\newcommand\Pic{{\rm Pic}}
\newcommand\tilder{}
\newtheorem{theorem}{Theorem}[section]
\newtheorem{proposition}[theorem]{Proposition}
\newtheorem{conjecture}[theorem]{Conjecture}
\newtheorem{condition}[theorem]{Condition}
\newtheorem{lemma}[theorem]{Lemma}
\newtheorem{definition}[theorem]{Definition}
\newtheorem{remark}[theorem]{Remark}
\begin{document}

\begin{center}
{\Large Ronald van Luijk} \\
{\large Density of rational points on elliptic surfaces}\\
\bigskip

\begin{minipage}{5in}
\it Suppose $V$ is a surface over a number field $k$ that admits two elliptic fibrations. We show that for each integer $d$ there exists an explicitly computable closed subset $Z$ of $V$, not equal to $V$, such that for each field extension $K$ of $k$ of degree at most $d$ over the field of rational numbers, the set $V(K)$ is Zariski dense as soon as it contains any point outside $Z$. We also present a version of this statement that is universal over certain twists of $V$ and over all extensions of $k$. 
This generalizes a result of Swinnerton-Dyer, as well as previous work of Logan, McKinnon, and the author.
\end{minipage}
\end{center}

\section{Introduction}

Logan, McKinnon, and the author proved the following theorem in \cite{LMvL}.

\begin{theorem}\label{diagonal}
Let $V$ be a diagonal quartic surface in $\PP_\QQ^3$, given by $ax^4+by^4+cz^4+dw^4=0$ for some 
coefficients $a,b,c,d \in \QQ^*$ whose product $abcd$ is a square. If $V$ contains a rational point
$P = [x_0: y_0: z_0: w_0]$ with $x_0y_0z_0w_0 \neq 0$ that is not contained in one of the $48$ lines
on $V$, then the set  $V(\QQ)$ of rational points on $V$ is Zariski dense in $V$, as well as dense in the 
real analytic topology on $V(\RR)$.  
\end{theorem}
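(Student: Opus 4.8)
\emph{Proof proposal.} The plan is to realize $V$ as a cover of a quadric surface, thereby equipping it with two genus-one fibrations over $\QQ$, and then to ``hop'' between these fibrations to spread the single rational point $P$ into a Zariski-dense set. Concretely, the map $[x:y:z:w]\mapsto[x^2:y^2:z^2:w^2]$ sends $V$ onto the diagonal quadric $Q\colon aX^2+bY^2+cZ^2+dW^2=0$; pulling back the two rulings of $Q$ produces two pencils of $(2,2)$-complete-intersection curves lying on $V$, that is, two genus-one fibrations $\pi_1,\pi_2\colon V\to\PP^1$. The hypothesis that $abcd$ is a square is exactly the condition for the two rulings of $Q$, hence for both $\pi_i$, to be defined over $\QQ$ rather than over a quadratic extension. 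I would then locate the exceptional data of the $\pi_i$: each of the $48$ lines is a section, a component of a reducible fibre, or part of a discriminant locus of one of the fibrations, and each coordinate hyperplane section $\{x=0\},\dots,\{w=0\}$ is a smooth plane quartic, hence of genus $3$ and not a fibre of either $\pi_i$. This identifies $Z$ as the union of the $48$ lines, the four coordinate hyperplane sections, the finitely many singular fibres of $\pi_1$ and of $\pi_2$, the finitely many fibres common to the two fibrations, and one further explicit curve described below; all of these are cut out by explicit polynomials once the $\pi_i$ are written down, $Z$ is closed, and $Z\ne V$.

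For the density statement, let $P\in V(\QQ)\setminus Z$. Then $F_1:=\pi_1^{-1}(\pi_1(P))$ and $F_2:=\pi_2^{-1}(\pi_2(P))$ are smooth genus-one curves over $\QQ$ containing the rational point $P$, so $(F_1,P)$ is an elliptic curve; and $F_2$ is not a fibre of $\pi_1$, so $\pi_1|_{F_2}$ is non-constant of degree $F_1\cdot F_2$ onto $\PP^1$, whence $m:=F_1\cdot F_2\ge 2$ because $F_2$ has positive genus. The degree-zero divisor class $[F_1\cap F_2]-m[P]$ on $F_1$ defines a rational point $R\in(F_1,P)(\QQ)$. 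Assuming $R$ is non-torsion, the orbit $\{nR:n\in\ZZ\}$ is Zariski dense in $F_1$ and consists of rational points of $V$; since $F_1\not\subseteq Z$, all but finitely many of them lie off $Z$. As $\pi_2$ is finite-to-one on $F_1$, its values on this orbit form an infinite subset of $\PP^1(\QQ)$, so infinitely many fibres of $\pi_2$ contain a rational point of $V$ off $Z$; repeating the construction fibrewise, with the roles of $\pi_1$ and $\pi_2$ interchanged, yields for all but finitely many of these fibres a Zariski-dense set of rational points inside it. A proper closed subset of the surface $V$ has only finitely many irreducible components of dimension one and so cannot contain infinitely many distinct irreducible fibres of $\pi_2$; therefore the union of these fibrewise-dense sets is Zariski dense in $V$.

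The step I expect to be the main obstacle is showing that $R$ is non-torsion. The clean way is to run the construction once over the function field $\QQ(t)$ of the base of $\pi_1$, obtaining a canonical section $\mathcal R$ of the fibration, and to prove that $\mathcal R$ has infinite order in the Mordell--Weil group of the generic fibre. For the $(2,2)$-fibrations carried by a diagonal quartic I expect this to be a finite explicit verification --- by a height or specialization estimate, or by exhibiting a place of bad reduction at which $\mathcal R$ is patently non-torsion --- and the square condition on $abcd$ should play no role here, having already been used only to make the fibrations rational. Once $\mathcal R$ is known to be non-torsion, the locus on $V$ where its specialization becomes torsion is a proper closed subset: this is the ``one further explicit curve'' adjoined to $Z$ above, together with its counterpart for $\pi_2$, and Silverman's specialization theorem then guarantees that $R$, and the analogous points at the infinitely many fibres used in the previous paragraph, are non-torsion away from $Z$.

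Finally, for density in the real analytic topology on $V(\RR)$ I would rerun the same argument with ``Zariski dense'' replaced by ``dense in $V(\RR)$''. The extra inputs are that the multiples of a non-torsion rational point on a real elliptic curve are dense in the identity component of its real locus, that $P\notin Z$ forces the real points produced by the construction to lie on, or to generate, every connected component of the fibres in play, and that the hopping between $\pi_1$ and $\pi_2$ reaches every connected component of $V(\RR)$. This last point is the genuinely new feature of the real case; I would settle it by a direct description of $V(\RR)$ and its connected components in terms of the signs of $a,b,c,d$ (only the pattern with exactly two negative signs occurs, since $abcd$ is a positive square), checking that each component meets infinitely many of the fibres that the construction produces.
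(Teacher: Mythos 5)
Your global architecture (two genus-one fibrations pulled back from the rulings of the quadric, made rational by the square condition on $abcd$, plus fibration hopping) is the right one and matches the strategy that this paper abstracts from \cite{LMvL}. But the step you yourself flag as the main obstacle --- showing the point $R$ is non-torsion --- is where the proposal genuinely breaks, and the tool you propose for it cannot work. Your $R$ is the image of $P$ under the map $\chi_1\colon F_1\to J(F_1)$, $P\mapsto [F_1\cap F_2]-m[P]$, which depends on $P$ within the fiber and not only on the fiber; it is a degree-$M_1$ covering of the Jacobian fibration, not a section of it, so there is no ``canonical section $\mathcal R$'' to which Silverman's specialization theorem applies. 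More fundamentally, the locus of points of $V$ where this class is torsion is $\bigcup_{r\geq 1}T_{1,r}$, a countable union of curves that is Zariski dense in $V(\kbar)$ --- it is emphatically not ``a proper closed subset,'' and no specialization argument will make it one. The engine of the actual proof is uniform boundedness of torsion: by Mazur's theorem (Merel--Oesterl\'e--Parent over number fields), a \emph{rational} torsion point on an elliptic curve over $\QQ$ has order at most $16$, so every rational point $P$ with $\chi_1(P)$ torsion lies in the \emph{closed} set $T_1(16)=\bigcup_{r\leq 16}T_{1,r}$, which can be computed with division polynomials and excised into $Z$. The same gap recurs at your second stage: ``all but finitely many of these fibres'' acquire a dense set of rational points only because the rational points of $F_1$ landing in the torsion locus of $\chi_2$ are confined to the finite set $F_1\cap T_2(16)$ ($F_1$ being horizontal for $\pi_2$ and not contained in $T_2(16)$); without the uniform bound this finiteness is unavailable.

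Two further shortfalls, even granting a repair of the torsion step. First, the theorem asserts a \emph{specific} exceptional locus --- the $48$ lines together with the coordinate tetrahedron $xyzw=0$ --- whereas your argument produces some unspecified proper closed $Z$ (singular fibers, common fibers, torsion loci). That weaker statement is essentially Swinnerton-Dyer's general theorem, i.e.\ Theorem \ref{mainone} of this paper; proving Theorem \ref{diagonal} as stated requires the explicit verification (the bulk of \cite{LMvL}) that $T_1(16)\cup T_2(16)$ and the bad components of singular fibers are contained in the stated locus --- note also that singular fibers need not be wholly excluded, since a smooth point of a singular fiber lies on a rational component with infinitely many rational points (Lemma \ref{fiberinfinite}(1)), which is exactly why only the lines, and not entire singular fibers, appear in the exceptional set. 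Second, the real-analytic density claim is left as a plan; the issue of reaching every connected component of $V(\RR)$ is real and is handled in \cite{LMvL} and \cite{SD} by explicit analysis, not addressed here.
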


The proof relies on the two elliptic fibrations that exist generically on diagonal quartic surfaces whose coefficients have square product.  
Swinnerton-Dyer \cite{SD} then showed that in much higher generality, namely for any K3 surface $V$ over $\QQ$ with at least two elliptic fibrations, there exists an explicitly computable Zariski closed subset $Z \subsetneq V$, such that if $V$ contains a rational point outside $Z$, then $V(\QQ)$ is Zariski dense in $V$; he mentions that similar arguments work over any number field. Here, and in the remainder of this paper, {\em explicitly computable} means that there is an algorithm that takes as input equations for both the surface $V$ and the two fibrations, and that gives as output equations for the closed subset $Z$. In that same paper \cite{SD}, Swinnerton-Dyer produces some nice results about density of $V(\QQ)$ in the real analytic and $p$-adic topologies as well. He also gives a cleaner proof of Theorem \ref{diagonal}, based on explicit formulas taken from \cite{SDdiag}.

Inspired by Swinnerton-Dyer's generalization, we similarly generalize another result from \cite{LMvL}, namely a version of Theorem \ref{diagonal} over number fields that is in some sense uniform over finite extensions. 
The only topology we deal with is the Zariski topology.  
The main arguments are essentially the same as the ones in \cite{LMvL}. Those were phrased differently from 
Swinnerton-Dyer's \cite{SD} in the sense that 
where the paper \cite{LMvL} uses an endomorphism $\alpha \colon F \to F$ of a genus-one curve $F$, Swinnerton-Dyer uses instead the associated covering $\chi\colon F \to J(F), P \mapsto (P) - (\alpha(P))$ of the Jacobian $J(F)$ of $F$, so that $\alpha(P)$ is the translation of $P$ by $-\chi(P)$. In this paper we will use both of the equivalent points of view. 
Arguments similar to ours are also used by Bogomolov and Tschinkel \cite{elliptic,ellipticK3}, and Harris and Tschinkel \cite{quartic} in the setting of potential density. 

\section{Setting and main theorems}\label{settings}

Let $k$ be a number field and let $\kbar$ be an algebraic closure of $k$. Let $V$ be a smooth projective surface over $k$. For $i =1,2$, let $f_i \colon V \to C_i$  be an elliptic fibration over $k$ to a curve $C_i$, and let $\gV_i$ be the generic fiber of $f_i$. 
We do not assume that the fibrations have a section, nor that they be minimal.  
We {\bf do assume}  that the fibrations are different in the sense that no fiber of $f_1$ is algebraically equivalent to a fiber of $f_2$; this is equivalent to the irreducible fibers of either one of the fibrations being horizontal curves with respect to the other fibration.

For $i=1,2$, let $\alpha_i \colon V \dashrightarrow V$ be a rational map that respects $f_i$. 
Then the map $\alpha_i$ is well defined on all smooth fibers of $f_i$. 
\check{mention why? N\'eron-models, or easier? restriction to a smooth fiber extends to the whole fiber}
Let $\alpha_i^\circ \colon \gV_i \to \gV_i$ be the restriction of $\alpha_i$ to the generic fiber $\gV_i$. 
Let $J(\gV_i)$ denote the Jacobian of $\gV_i$ and let $\chi_i \colon \gV_i \to J(\gV_i)$ be the map that sends $P$ 
to $(P) - (\alpha_i^\circ(P))$. 
We {\bf assume} that the map $\chi_i$ is not constant for $i=1,2$ and let $M_i$ denote the degree of $\chi_i$. 
In other words, the restriction $\alpha_i^\circ$ of $\alpha_i$ to the generic fiber $\gV_i$ is not merely 
translation by an element of the Jacobian $J(\gV_i)$. This is then automatically also the case for the 
restriction of $\alpha_i$ to all smooth fibers.  
In Remark \ref{alphafromL} we will see that rational maps such as $\alpha_1$ and $\alpha_2$ always exist.

\begin{definition}\label{deftwist}
A twist of the quintuple $(V,f_1,f_2,\alpha_1,\alpha_2)$ is a quintuple $(W,g_1,g_2,\beta_1,\beta_2)$, where $W$ is a variety, where $\beta_i\colon W \dashrightarrow W$ is a rational map respecting the fibration $g_i \colon W \to D_i$ over a curve $D_i$ for $i=1,2$, with all objects defined over $k$ and such that over $\kbar$ there are isomorphisms $\psi_i \colon (D_i)_\kbar \to (C_i)_\kbar$ and $\varphi \colon W_\kbar \to V_\kbar$, making the diagrams 
$$
\xymatrix{
& W_\kbar \ar'[d][dd]^\isom_{\varphi}  \ar[rr]^(0.4){g_i} && (D_i)_\kbar \ar[dd]^\isom_{\psi_i} \\
 W_\kbar \ar[dd]^\isom_{\varphi}  \ar[rr]^(0.65){g_i} \ar[ru]^{\beta_i}&& (D_i)_\kbar \ar[dd]^(0.3)\isom_(0.3){\psi_i} \ar@{=}[ur] \\
& V_\kbar  \ar'[r]_(0.5){f_i}[rr] && (C_i)_\kbar \\
V_\kbar  \ar[rr]_(0.45){f_i} \ar[ru]_{\alpha_i}&& (C_i)_\kbar \ar@{=}[ur] 
}
$$
commutative for $i=1,2$.
\end{definition}

By abuse of language, when we talk about a twist $(W,g_1,g_2)$ of $(V,f_1,f_2)$, or even a twist $W$ of $V$, we implicitly assume the existence of rational maps $\beta_1,\beta_2 \colon W \dashrightarrow W$, as well as morphisms $g_1,g_2$ in the latter case, for which $(W,g_1,g_2,\beta_1,\beta_2)$ is a twist of $(V,f_1,f_2,\alpha_1,\alpha_2)$. 
If we talk about an isomorphism $\varphi\colon W_\kbar \to V_\kbar$ corresponding to a twist $W$ of $V$, then we mean {\em some} isomorphism $\varphi$ for which there also exist $\psi_1$ and $\psi_2$ as in Definition \ref{deftwist}. 
Our first main result is the following.

\begin{theorem}\label{mainone}
For each integer $d$ there exists an explicitly computable closed subset $Z \subsetneq V$ such that for each field extension $K$ of $k$ of degree at most $d$ over $\QQ$ and for each twist $W$ of $V$, with corresponding isomorphism $\varphi\colon W_\kbar \to V_\kbar$, the set $W(K)$ is Zariski dense in $W$ as soon as it contains any point outside $\varphi^{-1}(Z)$. 
\end{theorem}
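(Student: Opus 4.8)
The plan is to reduce the theorem to two statements: a \emph{fibrewise orbit lemma}, which produces from a well-placed $K$-rational point infinitely many $K$-rational points on its fibre, and a \emph{sweep-out lemma}, which upgrades density along a horizontal fibre to density on the whole surface. The key discipline is that $Z$ is defined as a purely algebraic object on $V$ over $k$, and the isomorphism $\varphi$ of a twist is used only to transport algebraic data (closed subsets, divisor classes, torsion orders) between $V$ and $W$, never to move a $K$-rational point of $W$ into $V(K)$ — the latter would enlarge the base field and destroy the bound on $[K:\QQ]$. Explicitly, I would take
$$
Z \;=\; \mathrm{Indet}(\alpha_1)\cup\mathrm{Indet}(\alpha_2)\;\cup\; f_1^{-1}(\Delta_1)\cup f_2^{-1}(\Delta_2)\;\cup\; B_1(d)\cup B_2(d),
$$
where $\Delta_i\subset C_i$ is the finite set of points over which the fibre of $f_i$ is singular or over which $\chi_i$ fails to be a morphism of fibre-degree $M_i$, and $B_i(d)\subseteq V$ is the locus of points $P$ away from $f_i^{-1}(\Delta_i)$ for which the class $\chi_i(P)=(P)-(\alpha_i^\circ(P))$ is a torsion class of order at most $N$ in the Jacobian of the fibre through $P$. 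Here $N=N(d)$ is a uniform bound on the order of torsion points of elliptic curves over number fields of degree at most $d$, which exists by Merel's theorem on uniform boundedness of torsion. The sets $\mathrm{Indet}(\alpha_i)$ are finite, $f_i^{-1}(\Delta_i)$ is a finite union of fibres, and each component of $B_i(d)$ is the $\chi_i$-preimage of an $n$-torsion multisection of the relative Jacobian for some $n\le N$, hence — because $\chi_i$ has positive fibre-degree $M_i$ — a horizontal curve rather than all of $V$; thus $Z\subsetneq V$. Every ingredient (indeterminacy, discriminants, the morphism $\chi_i$ reconstructed from $\alpha_i$, torsion multisections via division polynomials, and an admissible value of $N(d)$) is effectively computable, so $Z$ is explicitly computable.

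Next I would establish the orbit lemma: for a twist $(W,g_1,g_2,\beta_1,\beta_2)$ with $\kbar$-isomorphism $\varphi$ as in Definition \ref{deftwist}, for $K\supseteq k$ with $[K:\QQ]\le d$, and for $P\in W(K)$ with $P\notin\varphi^{-1}(Z)$, the forward orbit $\{\beta_1^{\circ n}(P):n\ge 0\}$ is an infinite, hence Zariski dense, subset of $F(K)$, where $F=g_1^{-1}(g_1(P))$. Since $P\notin\varphi^{-1}(Z)$, the fibre $F$ is smooth of genus one with rational point $P$, so $(F,P)$ is an elliptic curve, $\beta_1$ restricts to a $K$-morphism $F\to F$, and the orbit lies in $F(K)$. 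Transporting via $\varphi$ and the commutative diagrams of Definition \ref{deftwist}, the pair $(F,\beta_1|_F)$ is $\kbar$-isomorphic to $(\varphi(F),\alpha_1|_{\varphi(F)})$, and $\varphi_*$ sends $(P)-(\beta_1(P))$ to $\chi_1(\varphi(P))$ while preserving torsion order; hence $P\notin\varphi^{-1}(B_1(d))$ means precisely that $(P)-(\beta_1(P))$ is not torsion of order $\le N(d)$, equivalently — since this class is $K$-rational and $N(d)$ is a uniform bound — not torsion at all. Writing $\beta_1|_F$ in the coordinates with origin $P$ as $x\mapsto\sigma x-c$ with $\sigma=1-\phi$ and $\phi$ an isogeny of degree $M_1$, one checks that the orbit is finite if and only if $(P)-(\beta_1(P))$ is torsion, the only competing possibility being $1+\sigma+\dots+\sigma^{r-1}=0$ for some $r\ge 1$; but that is equivalent to the iterate $\alpha_1^{\circ r}$ being a translation, which I would exclude from the outset (it is automatic unless $\gV_1$ has complex multiplication by an order with extra roots of unity). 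Therefore the orbit is infinite, and symmetrically for $\beta_2$.

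Finally, the sweep-out. Given $P$ as above, let $\Sigma$ be the set of points of the orbit $\{\beta_1^{\circ n}(P)\}$ lying outside $\varphi^{-1}(Z)$; since $P\in F\setminus\varphi^{-1}(Z)$ the curve $F$ is not contained in the proper closed set $\varphi^{-1}(Z)$, so $\Sigma$ is cofinite in the orbit and still infinite. For each $Q\in\Sigma$ the orbit lemma applied to $\beta_2$ gives an infinite, Zariski dense set of $K$-rational points on $F_Q=g_2^{-1}(g_2(Q))$. Because $F$ is a smooth — hence irreducible — fibre of $g_1$, it is horizontal for $g_2$ (the ``different fibrations'' hypothesis, preserved by the $\kbar$-isomorphism), so $g_2|_F$ is non-constant and $g_2(\Sigma)$ is infinite in $D_2$; therefore $\bigcup_{Q\in\Sigma}F_Q=g_2^{-1}(g_2(\Sigma))$ is Zariski dense in $W$, and so is the set $\bigcup_{Q\in\Sigma}\{\beta_2^{\circ n}(Q):n\ge 0\}\subseteq W(K)$, which proves that $W(K)$ is Zariski dense in $W$. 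The main obstacle, and the reason the theorem is phrased with a fixed $d$ and a $Z$ depending on $d$, is exactly the control of the finite-orbit locus: a priori this is only a countable union of proper closed subsets (fixed loci of the iterates $\beta_i^{\circ r}$), which could well be Zariski dense in $W$; it is the uniform boundedness of torsion over fields of degree at most $d$ that collapses it to the single proper closed subset $B_i(d)$. A secondary point demanding care is to carry out every comparison with $V$ at the level of closed subsets and divisor classes, so that the bound on $[K:\QQ]$ is never used up.
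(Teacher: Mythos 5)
Your choice of $Z$ (singular fibres, indeterminacy loci, and the low-order torsion loci $B_i(d)$ cut out by division polynomials up to the Merel bound $N(d)$) and your overall architecture (Merel gives fibrewise infinitude of $K$-points, then sweep out along the second fibration using that smooth $g_1$-fibres are horizontal for $g_2$) match the paper's; indeed the paper remarks that $S_1\cup S_2\cup T_1(B)\cup T_2(B)$ suffices, and only works harder to shrink $Z$ to $Z_0(B)\cup(Z_1(B)\cap Z_2(B))$, a refinement Theorem \ref{mainone} does not require. The genuine gap is in your \emph{fibrewise orbit lemma}. You produce infinitely many points on $F=g_1^{-1}(g_1(P))$ as the forward orbit $\{\beta_1^{\circ n}(P)\}$, and you claim this orbit is infinite once $(P)-(\beta_1(P))$ is non-torsion, ``the only competing possibility'' being $1+\sigma+\dots+\sigma^{r-1}=0$, which you propose to exclude as a CM pathology. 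It is not a pathology and cannot be excluded: writing $\alpha_1$ on a fibre as $x\mapsto\sigma(x)+c$, the case $\sigma=-1$ gives $\alpha_1^{\circ 2}=\mathrm{id}$, so every orbit has at most two points, while $\chi_1(P)=2P-c$ is non-constant of degree $4$ and generically non-torsion — all hypotheses of the theorem hold. This is exactly the case $m_i=2$ of the construction in Remark \ref{alphafromL} (a line bundle of degree $2$ on the fibres), so it occurs without any complex multiplication; the case $\sigma=0$ ($\alpha_1$ constant along fibres, $M_1=1$) is another instance where the orbit is finite but the class is not torsion, and it does not even satisfy your algebraic criterion $1+\sigma+\dots+\sigma^{r-1}=0$.

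The repair is exactly the reformulation the paper's introduction attributes to Swinnerton-Dyer, and you already have the needed object in hand since you define $B_i(d)$ via $\chi_i$: do not iterate $\beta_1$ at all. Since $F$ is smooth with the $K$-point $P$, it is isomorphic over $K$ to its Jacobian $J(F)$, and the class $D=(\beta_1(P))-(P)\in J(F)(K)$ is non-torsion (its order is not $\le N(d)$ because $P\notin\varphi^{-1}(B_1(d))$, and not $>N(d)$ by Merel), so $\langle D\rangle$ is infinite and $F(K)$ is infinite. The sweep-out then goes through verbatim with ``the orbit'' replaced by ``$F(K)$'': all but finitely many of these points avoid $\varphi^{-1}(Z)\cap F$, land on distinct smooth $g_2$-fibres, and have non-torsion class for $\beta_2$, again by Merel. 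With that substitution your plan is correct and coincides with the paper's proof of the weaker (but sufficient) form of Proposition \ref{efficient}.
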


Theorem \ref{mainone} implies Swinnerton-Dyer's Theorem 1 in \cite{SD} mentioned above, and is stronger in the sense that it is uniform over all twists of $V$ as well as over all finite extensions of bounded degree.  

For $i=1,2$, let the $j$-map $j_i \colon C_i \to \PP^1$ be given by $j_i(t) = j\big(f_i^{-1}(t)\big)$, the $j$-invariant of the (Jacobian of the) genus-one fiber $f_i^{-1}(t)$. If the map $j_i$ is nonconstant, then we let $d_i$ be its degree, otherwise we set $d_i = \infty$.  
If the $j$-maps $j_1$ and $j_2$ are both nonconstant, then there is a pseudo-uniform version of Theorem \ref{mainone} over {\em all} finite extensions of $k$ in the sense that for larger extensions the closed subset $Z$ only needs to be enlarged by a finite number of points. We will give a bound for this number that depends only on the field extension, the degrees $d_1$ and $d_2$, and the degrees $M_1$ and $M_2$, but our methods do not allow such a bound to be computed explicitly.   
Our second main result is the following.

\begin{theorem}\label{maintwo}
Assume the $j$-maps $j_1$ and $j_2$ are nonconstant. Then 
there exists an explicitly computable closed subset $Z \subsetneq V$ such that for each finite extension $K$ of $k$ there is an integer $n$ that only depends on $K$, such that for each twist $W$ of $V$, with corresponding isomorphism $\varphi\colon W_\kbar \to V_\kbar$, 
the set $W(K)$ is Zariski dense in $W$ as soon as it contains 
more than $n\cdot \min (d_1M_1,d_2M_2)$ points outside $\varphi^{-1}(Z)$. 
\end{theorem}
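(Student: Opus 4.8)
The plan is to obtain Theorem~\ref{maintwo} by running the proof of Theorem~\ref{mainone} and isolating the single place where the exceptional locus has to grow with $[K:\QQ]$, bounding that growth by a controlled number of points. First I would set up the iteration argument geometrically, so as to cover an arbitrary twist $(W,g_1,g_2,\beta_1,\beta_2)$ with isomorphism $\varphi\colon W_\kbar\to V_\kbar$ at once: one cannot literally reduce to $W=V$, because $\varphi$ need not send $W(K)$ into $V(K)$, but every curve, fibration and orbit below is defined over $\kbar$ and transported by $\varphi$, so it suffices to argue on $W$ with the geometric data pulled back from $V$. If $P\in W(K)$ lies on a smooth fibre $F=g_1^{-1}(g_1(P))$, then the restriction $\beta_1|_F$ corresponds under $\varphi$ to $\alpha_1$ on a smooth fibre of $f_1$ and, by the hypothesis that $\chi_1$ is nonconstant, is not a translation; assuming $\alpha_1^\circ$ does not have finite order (if it does, one works with $f_2$ instead from the start), either $P$ is preperiodic for $\beta_1|_F$, or the orbit $\{\beta_1^n(P)\}_{n\ge 0}\subseteq W(K)$ is infinite and hence Zariski dense in $F$. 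In the latter case, since $g_1$ and $g_2$ are different, $g_2$ restricts to a nonconstant map on the irreducible curve $F$, so one may choose in the orbit a point $Q$ that is not preperiodic for $\beta_2$; the orbit of $Q$ under $\beta_2$ is then Zariski dense in $g_2^{-1}(g_2(Q))$, and as $Q$ ranges over the dense-in-$F$ set of admissible points, $g_2(Q)$ takes infinitely many values, so the union of these fibres is Zariski dense in $W$. Thus a single point $P$ outside a suitable exceptional set forces $W(K)$ to be Zariski dense.

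\textbf{The universal set $Z$ and the torsion obstruction.} Next I would take $Z\subsetneq V$ to be a proper closed subset, computable from equations for $V,f_1,f_2,\alpha_1,\alpha_2$, that packages everything the mechanism needs except non-preperiodicity: the singular and reducible fibres of $f_1$ and $f_2$; the finitely many fibres over the (finite, since $j_i$ is nonconstant) set of $t$ with $j_i(t)\in\{0,1728,\infty\}$; the indeterminacy loci of $\alpha_1,\alpha_2$ and the loci where $\chi_i$ fails to have degree $M_i$; and the finitely many fibres of one fibration contained in the preperiodic locus of the other --- these are finite in number precisely because, $j_i$ being nonconstant, a component of the $m$-torsion multisection of the Jacobian of $f_i$ has genus tending to infinity with $m$, so only boundedly many (in terms of the fixed data, not of $K$) of its preimages under $\chi_i$ can have genus $\le 1$. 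With $Z$ so chosen, the mechanism runs from any $P\notin Z$ that fails to be preperiodic for at least one of $\alpha_1,\alpha_2$, and --- by chasing a finite orbit through the second fibration --- in fact from any $P\notin Z$ that fails to be preperiodic for \emph{both}. The only obstruction that remains is a point $P\notin Z$ preperiodic for both $\alpha_1$ and $\alpha_2$. Here Merel's theorem on the uniform boundedness of torsion over number fields enters: there is $N=N([K:\QQ])$ such that the Jacobian of any smooth fibre over a $K$-rational point of the base carries no $K$-rational torsion of order exceeding $N$; hence a $K$-point that is $\alpha_i$-preperiodic has $\chi_i$-image of order $\le N$ (after a bounded extension controlled by $M_i$, to trivialize the torsor), so it lies on a curve $B_i\subset V$ --- the locus swept out by the $\alpha_i$-preperiodic points whose associated orbit has bounded size in terms of $N$ and $M_i$.

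\textbf{The point count and uniformity over twists.} The doubly preperiodic $K$-points of $W$ then map under $\varphi$ into $B_1\cap B_2$. One checks, again using the nonconstancy of $j_1$ and $j_2$ via the genus bound above, that $B_1$ and $B_2$ share no component, so $B_1\cap B_2$ is finite; since each $B_i$ meets a fibre of $f_i$ in a number of points bounded in terms of $M_i$ and $N$, and its degree over the base is controlled by $d_i$, an intersection estimate on $V$ bounds $\#(B_1\cap B_2)$ by a quantity of the form $n\cdot\min(d_1M_1,d_2M_2)$, with $n$ depending only on $K$ --- the minimum reflecting the freedom to count over whichever of the two fibrations gives the smaller bound. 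Because $\varphi$ is a bijection on $\kbar$-points, $\varphi^{-1}(B_1\cap B_2)$ has the same cardinality on $W$; so if $W(K)$ contains more than $n\cdot\min(d_1M_1,d_2M_2)$ points outside $\varphi^{-1}(Z)$, at least one of them lies outside $\varphi^{-1}(Z)\cup\varphi^{-1}(B_1\cap B_2)$, and $W(K)$ is Zariski dense by the mechanism. The uniformity over all twists is thus formal: the entire argument is geometric and transported along $\varphi$, so no twist-by-twist analysis is needed.

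\textbf{The main obstacle.} I expect the delicate step to be the finiteness-and-counting of the doubly preperiodic locus: showing that $B_1$ and $B_2$ cannot share a component (equivalently, that no positive-dimensional family of fibres is simultaneously exceptional for both fibrations), and then extracting the explicit shape $n\cdot\min(d_1M_1,d_2M_2)$ of the bound. Both hinge essentially on the nonconstancy of $j_1$ and $j_2$ --- the argument genuinely collapses for isotrivial fibrations --- and the appeal to Merel's theorem is exactly what prevents $n$, hence the enlargement of $Z$, from being made effective, as the theorem's statement already warns.
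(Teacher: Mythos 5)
Your density mechanism is dynamical (iterate $\beta_1$ on a fibre, get an infinite orbit, then iterate $\beta_2$), and this is where the first genuine gap lies. The paper's hypothesis is only that $\chi_i\colon P\mapsto (P)-(\alpha_i^\circ(P))$ is nonconstant, which does not prevent $\alpha_i^\circ$ from having finite order as a map: in the basic line-bundle construction of Remark \ref{alphafromL} with $m_i=2$, the map is $P\mapsto -P+c$, an involution, so \emph{every} point is periodic and your orbit is never infinite. Your escape hatch (``work with $f_2$ instead'') fails when both $\alpha_1^\circ$ and $\alpha_2^\circ$ have finite order, e.g.\ both induced by fibrewise degree-$2$ bundles. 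The paper never iterates: it observes that $P$ and $\beta_i(P)$ are both $K$-points of the smooth fibre $F$, so the class $(\beta_i(P))-(P)$ lies in $J(F)(K)$; if that class has infinite order then $J(F)(K)\isom F(K)$ is already infinite (Lemma \ref{fiberinfinite}). Whether the class has infinite order is then exactly a torsion condition, handled by Merel--Oesterl\'e--Parent, with no reference to preperiodicity. Relatedly, your parenthetical about ``a bounded extension to trivialize the torsor'' is unnecessary: $F$ has the $K$-point $P$, so it is not a nontrivial torsor.

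The second gap is the count. You assert that an intersection estimate bounds $\#(B_1\cap B_2)$ by $n\cdot\min(d_1M_1,d_2M_2)$, but a B\'ezout-type bound on the intersection of two curves of degrees controlled by $d_1M_1$ and $d_2M_2$ would give something like a \emph{product}, not a minimum, and your claim that $B_1$ and $B_2$ share no component is not justified. The paper gets the minimum differently and more directly: it fixes \emph{one} fibration $i$ and counts the $K$-points $P$ (off $S_i$) whose class $(\beta_i(P))-(P)$ has order $N$ with $16\le N\le B$. Each such $P$ yields a $K$-point of the modular curve $X_1(N)$, which has genus $\ge 2$ for $N\ge 16$ and hence finitely many $K$-points by Faltings; conversely each point of $X_1(N)(K)$ with given $j$-invariant accounts for at most $\mu(j_0)\cdot d_i\cdot M_i$ points $P$ ($d_i$ fibres with that $j$-invariant, $M_i$ preimages under $\chi_i$, $\mu$ automorphisms). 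This gives the bound $d_iM_in_K$ with $n_K$ built from $\sum_{N=16}^{B}\#X_1(N)(K)$, and one simply uses whichever $i$ minimizes $d_iM_i$ (Lemma \ref{limitedpoints} and Proposition \ref{notefficient}). The small-torsion loci $T_{i,r}$ with $r\le 15$, for which Faltings is unavailable, are exactly what goes into the $K$-independent set $Z=Z_0(15)\cup Z_1(15)\cup Z_2(15)$; your proposed $Z$ (indeterminacy loci, fibres with $j\in\{0,1728,\infty\}$, genus arguments on torsion multisections) is both vaguer and aimed at the wrong obstruction. Without the modular curves $X_1(N)$ and Faltings' theorem your argument has no source for the integer $n$ depending only on $K$.
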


\begin{remark}\label{alphafromL}
Examples of rational maps $\alpha_1$ and $\alpha_2$ can be constructed as follows. Take  $i\in \{1,2\}$ and take a line bundle $\sL_i$ on $V$ or, more generally, a line bundle $\sL_i$ on $V_{\kbar}$ whose isomorphism class is defined over $k$.  Let $m_i$ denote the degree of the restriction $(\sL_i)_F$ of $\sL_i$ to any smooth fiber $F$ of $f_i$. Define $\tilder{\alpha}_i \colon V \dashrightarrow V$ by $\tilder{\alpha}_i(P)=R$ for the unique point $R$ on the fiber $F = f_i^{-1}(f_i(P))$ of $f_i$ through $P$ for which $\OO_F(R)$ is isomorphic to the degree-one bundle $\sL_F \otimes \OO_F\big((1-m_i)P\big)$. In this case also the Theorem of Riemann-Roch \cite[Theorem IV.1.3]{hag} shows that the map $\tilder{\alpha}_i$ is well defined on the smooth fibers of $f_i$.  

The map $\chi_i \colon \gV_i \to J(\gV_i)$ is in this case induced by the map $\gV_i \to \Pic^0 \gV_i, P \mapsto \OO_{\gV_i}(m_iP) \otimes \sL_{\gV_i}^{-1}$ and is the $m_i$-covering of $J(\gV_i)$ corresponding to what Swinnerton-Dyer calls $\psi$ (see \cite{SD}). 
The assumption that the map $\chi_i$ not be constant  is equivalent to $m_i$ being nonzero. 
In this example the degree of $\chi_i$ equals $M_i = m_i^2$. 


If the endomorphism ring of the generic fiber $\gV_i$ is just $\ZZ$, then all rational maps respecting $f_i$ are of this form. These rational maps are a direct generalization of the maps $e_1$ and $e_2$  used in \cite{LMvL}, where we had $\sL_1 = \sL_2 = \OO_V(1)$ and $m_1=m_2=4$, cf. \cite[Remark 2.15]{LMvL}. Also on the diagonal quartic surface given by $x^4+y^4+z^4 - t^4=0$, studied by Elkies \cite{elkies}, where the product of the coefficients is not a square, there exist 
two elliptic fibrations whose fibers are intersections of two quadrics, so again we could take $\sL_1 = \sL_2 = \OO_V(1)$ and $m_1=m_2=4$. 


Suppose the line bundles $\sL_1$ and $\sL_2$ induce rational maps $\tilder{\alpha}_1$ and $\tilder{\alpha}_2$ respectively. 
Also assume that for $i=1,2$ we have fibrations $g_i \colon W \to D_i$ of a variety $W$ to a curve $D_i$ and isomorphisms 
$\psi_i \colon (D_i)_\kbar \to (C_i)_\kbar$ and $\varphi \colon W_\kbar \to V_\kbar$, making the front face of the diagram of Definition \ref{deftwist} commutative. If for $i=1,2$ the isomorphism class of $\varphi^*(\sL_i)$ is defined over $k$, then we can associate a rational map $\tilder{\beta}_i \colon W \dashrightarrow W$ to it to obtain a twist 
$(W,g_1,g_2,\tilder{\beta}_1,\tilder{\beta}_2)$ of $(V,f_1,f_2,\tilder{\alpha}_1,\tilder{\alpha}_2)$. 
\end{remark}

Surfaces of Kodaira dimension $1$ admit a unique elliptic fibration \cite[Proposition IX.3]{beauville}, while those of Kodaira dimension $2$ do not admit any. 
This means that our results are constrained to surfaces of Kodaira dimension $-1$ and $0$. 

Of course there exist abelian surfaces containing only finitely many rational points over some number field. But there is no K3 surface over a number field that is known to contain only a finite, positive number of rational points. It may therefore be the case that Theorem \ref{mainone} is true for K3 surfaces even if we take $Z = \emptyset$. 
An interesting family of examples in this context is given by the diagonal quartic surfaces of the form $x^4-y^4 = t(z^4-w^4)$ for some rational number $t \in \QQ$. They contain a trivial point $[1:1:1:1]$, which would imply that the set of rational points is dense. For all $t$ with numerator and denominator at most $100$ this has been verified using Theorem \ref{diagonal}. This leads to the following conjecture. 

\begin{conjecture}
Every number can be written as the ratio of two differences of fourth powers. 
\end{conjecture}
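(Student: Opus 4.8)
\noindent\emph{Proof proposal.} The conjecture asserts that for every $t\in\QQ^*$ there are $a,b,c,d\in\QQ$ with $c^4\neq d^4$ and $a^4-b^4=t(c^4-d^4)$; the case $t=0$ being clear and the case $t<0$ reducing to $t>0$ by replacing $(c,d)$ with $(d,c)$, we may assume $t>0$. Such a tuple is precisely a rational point $[a:b:c:d]$ with $z^4\neq w^4$ on the diagonal quartic surface $S_t\colon x^4-y^4=t(z^4-w^4)$. The product of the coefficients of $S_t$ equals $t^2$, a square, so $S_t$ carries the two elliptic fibrations used in \cite{LMvL} and \cite{SD}, and Theorem~\ref{diagonal} applies to it; more precisely, applying Theorem~\ref{mainone} to $S_t$ with $d=1$ and the trivial twist $W=S_t$ furnishes an explicitly computable closed subset $Z_t\subsetneq S_t$ such that $S_t(\QQ)\not\subseteq Z_t$ already forces $S_t(\QQ)$ to be Zariski dense — in particular to contain points with all four coordinates nonzero, and so with $z^4\neq w^4$. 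Thus it suffices to exhibit, for every $t>0$, a single rational point of $S_t$ lying outside $Z_t$.

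Every $S_t$ contains the trivial point $P_0=[1:1:1:1]$, but $P_0$ lies on the line $\{x=y,\ z=w\}$, one of the $48$ lines on $S_t$, so Theorem~\ref{diagonal} does not apply to $P_0$ directly: one must instead check that $P_0\notin Z_t$. The plan is to make everything uniform in $t$. Take for $f_{1,t},f_{2,t}$ the two fibrations above and for $\alpha_{1,t},\alpha_{2,t}$ the maps of Remark~\ref{alphafromL} attached to $\sL_1=\sL_2=\OO(1)$ (so $m_i=4$ and $M_i=16$), write them out as explicit formulas in $t$, and run the algorithm that produces $Z_t$ from these data. Since all of the input varies algebraically with $t$, one expects a closed subset of the total space $\mathcal S\to\AA^1_t$ that restricts to $Z_t$ on the fibre over each $t$, together with finitely many exceptional values of $t$ to be dealt with separately; the statement then reduces to showing that the section $\AA^1_t\to\mathcal S$, $t\mapsto[1:1:1:1]$, is not contained in that closed subset, plus a finite verification.

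The main obstacle is exactly this last point. By the description underlying Theorems~\ref{mainone} and~\ref{maintwo}, $Z_t$ is made up — apart from the components forced by allowing twists — of the bad fibres of the two fibrations (singular fibres, fibres over the finitely many special values of the $j$-maps, the indeterminacy loci of the $\alpha_{i,t}$) together with, on each good fibre, the points that fail to generate an infinite subgroup under the action induced by $\alpha_{i,t}$ on the Jacobian of that fibre. Now $P_0$ sits on a line that may well be a component of a singular fibre of one or both fibrations; if it is such for both, then $P_0\in Z_t$ and the route fails outright, while even if $P_0$ lies on a good fibre of, say, $f_{2,t}$, one still has to rule out that $P_0$ is torsion-like there, and a point as symmetric as $[1:1:1:1]$ has every chance of being torsion. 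Should this happen for infinitely many $t$, a genuinely different input would be required. Exhibiting a nontrivial rational section $t\mapsto[x(t):y(t):z(t):w(t)]$ of $\mathcal S\to\AA^1_t$ avoiding the $48$ lines does not obviously help: $\mathcal S$ is rational, being birational to $\PP^3_{[x:y:z:w]}$ with $t$ recovered as $(x^4-y^4)/(z^4-w^4)$, so its rational points are already dense, yet such a section corresponds to a rational curve in $\PP^3$ with prescribed image in the $t$-line, and pinning that image down is precisely where the difficulty concentrates. The alternative is a descent or root-number argument, uniform in $t$, showing that some explicit fibre of $S_t$ has positive Mordell--Weil rank over $\QQ$ for every $t$. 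I would expect the core of any genuine proof to lie in one of these two directions — possibly after splitting into cases according to congruence conditions on $t$ — rather than in the bare computation of $Z_t$.
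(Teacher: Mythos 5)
The statement you are asked to prove is labelled a \emph{conjecture} in the paper, and the paper offers no proof: the author only reports that for all $t$ with numerator and denominator at most $100$ the claim has been \emph{verified} (by finding, for each such $t$, a rational point on $x^4-y^4=t(z^4-w^4)$ satisfying the hypotheses of Theorem~\ref{diagonal}, i.e.\ with all coordinates nonzero and off the $48$ lines), and remarks that Zariski density of $S_t(\QQ)$ would imply the claim for that $t$. So there is no proof in the paper to compare yours against, and your text, read as a proof, has a genuine gap: the entire content of the conjecture is the production, for \emph{every} $t$, of a rational point of $S_t$ outside the exceptional set $Z_t$ (or satisfying the hypotheses of Theorem~\ref{diagonal}), and that step is exactly the one you do not carry out. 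Your reductions up to that point are fine --- the sign reduction in $t$, the identification of $abcd=t^2$ as a square, and the observation that Zariski density yields a point with $z^4\neq w^4$ --- but they only restate the problem in the form the paper already gives it.

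To your credit, you diagnose the obstruction correctly and honestly: the trivial point $[1:1:1:1]$ lies on one of the $48$ lines (indeed on a component of a reducible fibre of the relevant fibrations), so neither Theorem~\ref{diagonal} nor Theorem~\ref{mainone} applies to it, and no uniform-in-$t$ section avoiding $Z_t$ is known. Your closing paragraph --- that a genuine proof would need either an explicit family of nontrivial points varying with $t$, or a rank/root-number argument uniform in $t$ --- is a reasonable assessment of where the difficulty lies, but it is a research programme, not a proof. As submitted, this cannot be accepted as a proof of the conjecture; it is at best a correct account of why the conjecture follows from Theorem~\ref{mainone} \emph{conditionally} on finding one good point per $t$.
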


In the next section we will state and prove explicit versions of Theorem \ref{mainone} and \ref{maintwo}. Those also allow one to easily check whether a given point is contained in the mentioned subset $Z$. 

\section{Explicit subsets}

The proof of Theorems \ref{mainone} and \ref{maintwo} relies on an explicit version of Merel's Theorem \cite[Corollaire]{merel}, which bounds the torsion subgroup of the Mordell-Weil group of any elliptic curve over a number field. Oesterl\'e  sharpened Merel's original explicit bound on possible prime orders. He showed 
\check{oesterl\'e published anywhere??}
 that if  $E$ is an elliptic curve over a number field $K$ of degree $d$ over $\QQ$ and the Mordell-Weil group $E(K)$ contains a point of prime order $p$, then we have $p \leq (1+3^{d/2})^2$; 
Parent \cite[Th\'eor\`eme 1.2]{parent} shows that if $E(K)$ contains a point of prime power order $p^n$ with $p$ prime, then we have 
\begin{equation}\label{parentbounds}
p^n \leq  \left\{
\begin{array}{ll}
65(3^d-1)(2d)^6 & (p \neq 2,3), \\
65(5^d-1)(2d)^6 & (p =3), \\
129(3^d-1)(3d)^6 & (p =2). 
\end{array}
\right.
\end{equation}
This is summarized in the following theorem.

\begin{theorem}[Merel, Oesterl\'e, Parent]\label{merel}
The torsion subgroup of an elliptic curve over a number field of degree at most $d$ is isomorphic 
to a subgroup of $\ZZ/B\ZZ \times \ZZ/B\ZZ$ with 
\begin{equation}\label{globalbound}
B = \prod_{p \leq (1+3^{d/2})^2} p^{n_p},
\end{equation}
where the product ranges over primes $p$ and where $p^{n_p}$ is the largest power of $p$ satisfying \eqref{parentbounds}. 
\end{theorem}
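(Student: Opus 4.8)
The plan is to deduce this statement from the quoted results of Merel, Oesterl\'e, and Parent by pure group theory, so there is no new arithmetic content to produce. First, recall that for an elliptic curve $E$ over a field $K$ the group of $n$-torsion points over $\kbar$ is isomorphic to $(\ZZ/n\ZZ)^2$ for every $n$, so the finite abelian group $T = E(K)_{\mathrm{tors}}$ embeds into $(\QQ/\ZZ)^2$. By the structure theorem for finite abelian groups this forces $T \isom \ZZ/a\ZZ \times \ZZ/b\ZZ$ for uniquely determined positive integers $a \mid b$. Since $a \mid b$, it suffices to show that $b$ divides the integer $B$ of \eqref{globalbound}: then $\ZZ/a\ZZ$ and $\ZZ/b\ZZ$ both embed into $\ZZ/B\ZZ$, and hence $T$ embeds into $\ZZ/B\ZZ \times \ZZ/B\ZZ$.

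To bound $b$ I would argue one prime at a time. Set $d = [K:\QQ] \le d$ and fix a prime $p$ dividing $b$, say with $p^m$ the exact power of $p$ in $b$ and $m \ge 1$. Because $\ZZ/b\ZZ$ is a direct summand of $T$, the group $E(K)$ contains a point of exact order $p^m$, and consequently (multiplying by $p^{m-1}$) also a point of exact order $p$. The Oesterl\'e refinement of Merel's theorem recalled above then gives $p \le (1 + 3^{d/2})^2$, so only the primes appearing in the product \eqref{globalbound} can divide $b$. For each such $p$, Parent's inequalities \eqref{parentbounds} applied to the point of order $p^m$ show that $p^m$ satisfies the relevant bound; by the definition of $n_p$ as the largest exponent for which $p^{n_p}$ satisfies \eqref{parentbounds}, this yields $m \le n_p$. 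Hence the $p$-adic valuation of $b$ is at most $n_p$ for every prime $p$, so $b$ divides $\prod_p p^{n_p} = B$, which completes the proof.

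The only point that deserves an explicit word is that $B$ is well defined. For each prime $p$ the right-hand sides of \eqref{parentbounds} are fixed integers, so there are only finitely many $n$ with $p^n$ satisfying the bound and a largest such exponent $n_p \ge 0$ exists; moreover only finitely many primes lie below $(1+3^{d/2})^2$, so the product in \eqref{globalbound} is finite. I do not expect any genuine obstacle: the entire arithmetic depth is carried by the cited theorems, and the steps above are just the bookkeeping translating their statements about cyclic subgroups of prime and prime-power order into a single uniform bound on the whole torsion subgroup.
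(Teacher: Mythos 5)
Your argument is correct and is precisely the derivation the paper intends: the theorem is stated there as a summary of the quoted Oesterl\'e and Parent bounds, with no further proof given, and your reduction via $T \isom \ZZ/a\ZZ \times \ZZ/b\ZZ$ with $a \mid b$ (from the embedding of $T$ into $E(\kbar)[n] \isom (\ZZ/n\ZZ)^2$) followed by the prime-by-prime bound on $b$ is exactly the standard bookkeeping being elided. The only cosmetic blemish is the clause ``Set $d = [K:\QQ] \le d$,'' which reuses the letter $d$; one should instead note that the bounds in \eqref{parentbounds} and the bound $(1+3^{d/2})^2$ are increasing in the degree, so the case of degree strictly less than $d$ is subsumed.
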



\begin{definition}\label{defT}
For $i=1,2$ and any positive integer $r$, we let $T_{i,r}$ denote the closure of the locus of all points $P \in V$, for which the fiber $F = f_i^{-1}(f_i(P))$ is smooth and for which the divisor $\big(\alpha_i(P)\big) - (P)$ on $F$ has exact order $r$ in the Jacobian of $F$. 
\end{definition}

The map from the smooth fiber $F$ mentioned in Definition \ref{defT} to its Jacobian, given by $P \mapsto 
\big(\alpha_i(P)\big) - (P)$, is not constant by assumption (in fact of degree $M_i$), so  the divisor $\big(\alpha_i(P)\big) - (P)$ is only of order $r$ for finitely many points $P$ on $F$, and the set $T_{i,r}$ does not contain $F$. It follows that $T_{i,r}$ does not contain any irreducible components of fibers of $f_i$ for all positive integers $r$.  

Note that $T_{i,r}$ is explicitly computable as follows. Take the generic point $\eta$ on the generic fiber $\gV_i$. Then $\alpha_i(\eta)$ is another point on $\gV_i$. After bringing the generic fiber $\gV_i$ with distinguished point $\eta$ into Weierstrass form, the point $\alpha_i(\eta)$ corresponds with a point that we can equate to the $r$-torsion points, which we can find with the $r$-division polynomials. This gives equations for those $P$ for which $\alpha_i(P)$ is an $r$-torsion point on its smooth fiber with $P$ as distinguished neutral element; this is equivalent to the condition for $T_{i,r}$.


For $i=1,2$, we let $S_i$ denote the union of the singular fibers of $f_i$ and for each integer $x$ we set 
$$
T_i(x) = \bigcup_{1 \leq r \leq x} T_{i,r}.
$$
It is not hard to prove Theorem \ref{mainone} by showing that for any twist $W$ of $V$ with corresponding isomorphism $\varphi \colon W_\kbar \to V_\kbar$, and for any finite extension $K$ of $k$ of degree $d$ over $\QQ$, with $B$ as in \eqref{globalbound}, the set $W(K)$ is dense in $W$ as soon as it contains a point outside the set 
$\varphi^{-1}(S_1 \cup S_2 \cup T_1(B) \cup T_2(B))$. 
We will show in Proposition \ref{efficient} that the same conclusion holds when we replace this set by a much smaller one. 
This stronger statement, however, requires a little more care to prove. 

To avoid having to choose a twist $W$ of $V$ in almost every statement of the remainder of this section, we now fix a twist $(W,g_1,g_2,\beta_1,\beta_2)$ of $(V,f_1,f_2,\alpha_1,\alpha_2)$, knowing that everything that will be proved for $W$, in fact holds for every twist. Let $D_1$ and $D_2$ be the base curves of the fibrations $g_1$ and $g_2$ respectively. 
Let  $\psi_i \colon (D_i)_\kbar \to (C_i)_\kbar$, for $i=1,2$, and $\varphi \colon W_\kbar \to V_\kbar$ be isomorphisms making the diagrams
of Definition \ref{deftwist} commute. 

\begin{condition}\label{Xi}
Let $x$ be an integer and $K$ an extension of $k$. 
We say that a point $P \in W(K)$ satisfies $\Xi_i(x)$, for $i=1,2$, if the fiber $F= g_i^{-1}(g_i(P))$ of $g_i$ through $P$ is smooth and 
for every integer $r>x$ the divisor class of $\big(\beta_i(P)\big)-(P)$ does not have order $r$ in the Jacobian of $F$. 
\end{condition}

Note that a point $P\in W(K)$ satisfies $\Xi_i(x)$ if and only if $P$ lies on a smooth fiber and we have $\varphi(P) \not \in \bigcup_{r>x} T_{i,r}$. 

\begin{definition}
Suppose $\{i,j\} = \{1,2\}$ and let $x$ be an integer. Then we let $Z_i(x)$ be the union of $T_i(x)$ and the singular points of singular fibers of $f_i$.
\end{definition}

\begin{lemma}\label{fiberinfinite}
Suppose $i \in \{1,2\}$, let $K$ be any field extension of $k$, and let $x$ be a positive integer.  
Suppose that $W(K)$ contains a point $P$ outside $\varphi^{-1}(Z_i(x))$ and let $F= g_i^{-1}(g_i(P))$ be the fiber of $g_i$ through $P$. 
Then the following statements hold. 
\begin{enumerate}
\item If $F$ is singular and $C\subset F$ is an irreducible component of $F$ containing $P$, then $C(K)$ is infinite. 
\item 
%
If $P$ satisfies $\Xi_i(x)$,
then the set $F(K)$ is infinite.  
\end{enumerate}
\end{lemma}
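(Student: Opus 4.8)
The plan is to reduce both statements to the following principle: a smooth genus-one curve $F$ over a number field $K$ has infinitely many $K$-rational points as soon as its Jacobian $J(F)$ has positive rank (so that $J(F)(K)$ is infinite) and $F$ carries a $K$-point; and an irreducible component $C$ of a singular fiber, being a (possibly non-smooth) rational curve of arithmetic genus $0$ over $K$ with a smooth $K$-point on it, is birational over $K$ to $\PP^1$ and hence has infinitely many $K$-points. The first of these is what drives (2), the second is what drives (1).

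First I would treat (1). Since $P \notin \varphi^{-1}(Z_i(x))$, by definition of $Z_i(x)$ the point $\varphi(P)$ is not a singular point of a singular fiber of $f_i$, so $P$ is a smooth point of the fiber $F$ of $g_i$; hence $P$ lies on a unique irreducible component $C$ and is a smooth point of $C$. Because the fibers of an elliptic fibration have arithmetic genus one, a reducible fiber is a union of rational curves; the component $C$ through $P$ therefore has geometric genus $0$, and its normalization $\tilder{C}$ is a smooth genus-zero curve over $K$ with a $K$-point (the preimage of $P$), so $\tilder{C} \isom \PP^1_K$. Since $C \to \tilder{C}$ is birational over $K$ and $\PP^1_K(K)$ is infinite, $C(K)$ is infinite, giving (1). (One small point to be careful about: $C$ need not be geometrically irreducible, but the hypothesis that $P \in C(K)$ is a smooth point forces the component through $P$ to be defined and geometrically irreducible over $K$; alternatively one argues with the connected component, which after base change to $\kbar$ need not stay irreducible, but the smooth $K$-point pins down a single geometric component stable under $\mathrm{Gal}(\kbar/K)$.)

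Next, (2). Suppose $P$ satisfies $\Xi_i(x)$, so $F = g_i^{-1}(g_i(P))$ is a smooth genus-one curve over $K$ with $P \in F(K)$, and for every integer $r > x$ the class of $\bigl(\beta_i(P)\bigr) - (P)$ does not have order $r$ in $J(F)$. I want to conclude that this divisor class has infinite order. Indeed, write $\delta = \bigl(\beta_i(P)\bigr) - (P) \in J(F)(K)$, a class of degree zero. If $\delta$ were torsion, then (since by the $\Xi_i(x)$-condition we may assume $x$ has been taken large enough, namely $x \ge B$ with $B$ the Merel–Oesterlé–Parent bound from Theorem \ref{merel}) its order would be at most $B \le x$, contradicting $\Xi_i(x)$ — so I should either fold the choice $x \ge B$ into the statement's use, or simply observe that in the intended application $x$ is taken to be $B$; in any case, under $\Xi_i(x)$ with $x$ large, $\delta$ has infinite order, so the subgroup it generates is infinite, and $J(F)(K)$ is infinite. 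Translation by elements of $J(F)(K)$ acts on $F(K)$ (using the $K$-point $P$ to identify $F$ with a torsor under $J(F)$, or directly: $F(K) \to J(F)(K)$, $Q \mapsto (Q)-(P)$, is injective), so $F(K)$ is infinite, proving (2).

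The main obstacle I expect is not the density mechanics but the bookkeeping around the singular-fiber case and the rationality-over-$K$ issues: checking that the relevant component $C$ of a reducible fiber really is rational \emph{over $K$} and not merely over $\kbar$, and handling multiplicity — a component of a Kodaira fiber can be non-reduced, so one must pass to $C_{\mathrm{red}}$, and for fibers of type other than a single smooth rational curve one must confirm that a smooth $K$-point still forces the reduced component through it to be a form of $\PP^1$ with a rational point. This is where I would spend the most care. A secondary subtlety is making explicit the link to Theorem \ref{merel}: the statement of the lemma is for an arbitrary positive integer $x$, but the infinite-order conclusion in (2) genuinely needs $x \ge B$; I would either state this hypothesis or note that the lemma is applied only with $x = B$, so that $\Xi_i(B)$ rules out all torsion of the displayed class.
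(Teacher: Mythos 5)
Part (1) of your argument matches the paper's: $P \notin \varphi^{-1}(Z_i(x))$ forces $P$ to be a smooth point of $F$, hence lies on a unique component $C$, which is therefore defined over $K$, has genus $0$, and is birational over $K$ to $\PP^1$ via the smooth $K$-point. That part is fine.

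In part (2), however, there is a genuine gap: you never use the hypothesis that $P$ lies outside $\varphi^{-1}(Z_i(x))$. By definition, $Z_i(x)$ contains $T_i(x) = \bigcup_{1 \le r \le x} T_{i,r}$, and $T_{i,r}$ is (the closure of) the locus where the class of $\bigl(\alpha_i(P)\bigr)-(P)$ has exact order $r$. So the hypothesis $P \notin \varphi^{-1}(Z_i(x))$ already rules out every order $r \le x$ for the class $D = \bigl(\beta_i(P)\bigr)-(P)$, while $\Xi_i(x)$ rules out every order $r > x$. Together these exclude all finite orders, so $D$ has infinite order --- for \emph{any} positive integer $x$ and \emph{any} field extension $K$ of $k$, with no arithmetic input whatsoever. (The paper only adds the remark that matching the condition $\varphi(P)\notin T_i(x)$ on $V$ with the divisor on the fiber of $W$ is a geometric statement, so one may reduce to $W=V$.) Your substitute --- invoking Merel--Oesterl\'e--Parent and proposing to require $x \ge B$ --- is not a harmless repair: first, the lemma is stated for an arbitrary field extension $K$ of $k$, where Theorem \ref{merel} does not apply; second, the lemma is later applied with $x = 15$ (in Propositions \ref{notefficient} and \ref{efficienttwo}), so restricting to $x \ge B$ would break the proof of Theorem \ref{maintwo}. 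The missing idea is simply to read off from the definition of $Z_i(x)$ that the small orders are already excluded by the hypothesis on $P$.
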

\begin{proof}
If $F$ is a singular fiber, then $P$ is a smooth point on $F$, so $C$ is the unique component of $F$ containing $P$, and therefore $C$ is also defined over $K$; 
since the genus of $C$ equals $0$ in this case, we find that $C$ is birational over $K$ to $\PP^1$, so $C(K)$ is infinite indeed. 
Suppose $F$ is smooth and $P \in F(K)$ satisfies $\Xi_i(x)$. As the fiber $F$ has a $K$-point, it is isomorphic to its Jacobian $J=J(F)$, so it suffices to show that the divisor $D = \big(\beta_i(P)\big)-(P) \in J(K)$ has infinite order. This is a geometric statement, so we assume $(W,g_1,g_2,\beta_1,\beta_2)=(V,f_1,f_2,\alpha_1,\alpha_2)$ without loss of generality. 
The divisor $D$ does not have order $r$ in $J(K)$ for any $r\leq x$ per definition of $Z_i(x)$. As it also 
does not have order $r$ for any integer $r>x$ per assumption of $\Xi_i(x)$, we conclude that it has infinite order, which finishes the proof.
\end{proof}

An immediate consequence of Lemma \ref{fiberinfinite} is the following lemma. 

\begin{lemma}\label{dense}
Suppose $i \in \{1,2\}$, let $K$ be any field extension of $k$, and let $x$ be a positive integer.  
Let $C \subset W$ be a horizontal curve with respect to $g_i$ that is not contained in $\varphi^{-1}(T_i(x))$ and for which $C(K)$ is infinite.  If only finitely many points $P \in W(K)$ outside $\varphi^{-1}(S_i \cup T_i(x))$
do not satisfy $\Xi_i(x)$, then $W(K)$ is Zariski dense in $W$.  
\end{lemma}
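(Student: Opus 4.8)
The plan is to use the infinitude of $K$-points on a single horizontal curve to produce infinitely many fibers of $g_i$ with a $K$-point, and then invoke the first lemma to spread density across each such fiber.

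Here is how I would carry it out. Let $C \subset W$ be the horizontal curve given in the hypothesis, with $C(K)$ infinite and $C \not\subset \varphi^{-1}(T_i(x))$. Since $C$ is horizontal with respect to $g_i$, the restriction $g_i|_C \colon C \to D_i$ is a nonconstant (hence finite) morphism; therefore the image $g_i(C(K))$ is an infinite subset of $D_i(K)$. Removing from $C$ the finitely many points lying on $\varphi^{-1}(S_i \cup T_i(x))$ (this is finite because $C \not\subset \varphi^{-1}(T_i(x))$ and $C$ is not a component of a fiber, so $C \not\subset \varphi^{-1}(S_i)$), we still have infinitely many points $P \in C(K)$ outside $\varphi^{-1}(S_i \cup T_i(x))$. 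By the hypothesis of the lemma, all but finitely many of these points satisfy $\Xi_i(x)$. For each such $P$, the fiber $F = g_i^{-1}(g_i(P))$ is smooth, and since $P \notin \varphi^{-1}(Z_i(x))$ (note $Z_i(x) = T_i(x)$ on smooth fibers, as the extra points of $Z_i(x)$ are singular points of singular fibers), part (2) of Lemma \ref{fiberinfinite} applies and gives that $F(K)$ is infinite.

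Now I would assemble the conclusion. We have produced infinitely many distinct fibers $F = g_i^{-1}(t)$, indexed by the infinitely many distinct values $t = g_i(P) \in D_i(K)$ arising above, each of which is a curve in $W$ with $F(K)$ infinite, hence with Zariski closure in $W$ equal to $F$ itself (an irreducible curve). The union of the $K$-points on these fibers is therefore a set whose Zariski closure contains infinitely many distinct irreducible curves. A subvariety of the surface $W$ containing infinitely many distinct irreducible curves must be all of $W$ (its closure cannot be a finite union of curves and points, nor contained in a single fiber since the fibers are distinct), so $W(K)$ is Zariski dense in $W$.

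The one point requiring a little care — and the closest thing to an obstacle — is ensuring the fibers $F$ we produce are genuinely distinct and irreducible, so that their union really does accumulate on a two-dimensional set rather than piling up inside finitely many curves; this is handled by observing that distinct values of $g_i(P)$ give distinct fibers, that a smooth fiber of an elliptic fibration is irreducible, and that infinitely many distinct irreducible curves on a surface cannot be confined to any proper closed subset. Everything else reduces to the finiteness bookkeeping of removing the bad loci $\varphi^{-1}(S_i \cup T_i(x))$ from $C$, which is routine given that $C$ is horizontal and not contained in $\varphi^{-1}(T_i(x))$.
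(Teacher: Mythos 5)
Your proof is correct and follows essentially the same route as the paper's: use the horizontality of $C$ and the finiteness of $C\cap\varphi^{-1}(S_i\cup T_i(x))$ to find infinitely many points of $C(K)$ on distinct smooth fibers satisfying $\Xi_i(x)$, apply Lemma \ref{fiberinfinite} to each to get infinitely many fibers with infinitely many $K$-points, and conclude density. The paper's version is just a more compressed rendering of the same bookkeeping.
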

\begin{proof}
The curve $C$ intersects $\varphi^{-1}(T_i(x))$ and each fiber of $g_i$ in only finitely many points. 
Therefore there are infinitely many smooth fibers with a point in $C(K)$ and for all but finitely many of those fibers there is a such point that is not contained in $\varphi^{-1}(T_i(x))$ and does satisfy $\Xi_i(x)$. Since $\varphi^{-1}(T_i(x))$ and $\varphi^{-1}(Z_i(x))$ only differ in singular fibers, Lemma \ref{fiberinfinite} implies that there are infinitely many fibers of $g_i$ with an irreducible component that contains infinitely many $K$-rational points, so $W(K)$ is Zariski dense.
\end{proof}

\begin{definition}
For any integer $x$ we let 
$\mathcal{C}(x)$ denote the collection of all irreducible components of fibers of $f_1$ or $f_2$ that are contained in 
$(S_1 \cap S_2) \cup T_1(x) \cup T_2(x)$ and we set 
$$
Z_0(x) = \bigcup_{C \in \mathcal{C}(x)} C.
$$
\end{definition}

Note that $T_i(x)$ does not contain any components of fibers of $f_i$ for $i=1,2$, so $\mathcal{C}(x)$ consists of irreducible curves  that for both fibrations are contained in a singular fiber and of components of any fiber of $f_1$ that are contained in $T_2(x)$ or vice versa.

\begin{proposition}\label{efficient}
Let $K$ be a finite extension of $k$ of degree at most $d$ over $\QQ$ and let $B$ be as in \eqref{globalbound}. If $W(K)$ contains a point outside $\varphi^{-1}(Z)$ for $Z = Z_0(B) \cup (Z_1(B) \cap Z_2(B))$, then $W(K)$ is dense in $W$. 
\end{proposition}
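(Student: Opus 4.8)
The plan is to reduce, via Merel–Oesterlé–Parent (Theorem~\ref{merel}), to the case analysis around the two fibrations $g_1$ and $g_2$, and then to feed this into Lemma~\ref{dense}. Start with a $K$-point $P_0 \in W(K)$ outside $\varphi^{-1}(Z)$ with $Z = Z_0(B) \cup (Z_1(B)\cap Z_2(B))$. Since $P_0 \notin \varphi^{-1}(Z_1(B)\cap Z_2(B))$, there is an index $i\in\{1,2\}$ with $P_0 \notin \varphi^{-1}(Z_i(B))$; fix this $i$ and let $\{i,j\}=\{1,2\}$. By Lemma~\ref{fiberinfinite}, the fiber (or component thereof) of $g_i$ through $P_0$ supplies an infinite set of $K$-points, hence a horizontal curve $C$ for $g_j$ — here one uses the standing hypothesis that irreducible fibers of one fibration are horizontal for the other, together with the fact that $P_0 \notin \varphi^{-1}(Z_0(B))$ ensures the relevant component is not contained in $\varphi^{-1}(T_j(B))$. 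So we obtain a horizontal curve $C$ for $g_j$ with $C(K)$ infinite and $C \not\subset \varphi^{-1}(T_j(B))$.

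Next I would verify the remaining hypothesis of Lemma~\ref{dense} for the index $j$: that only finitely many $P \in W(K)$ outside $\varphi^{-1}(S_j \cup T_j(B))$ fail to satisfy $\Xi_j(B)$. A point $P$ on a smooth fiber $F$ of $g_j$ fails $\Xi_j(B)$ precisely when the class of $(\beta_j(P)) - (P)$ has some finite order $r > B$ in $J(F)$. But $F$ has a $K$-point ($P$ itself), so $F \isom J(F)$ over $K$, and this class lies in $J(F)(K)_{\mathrm{tors}}$; by Theorem~\ref{merel} its order divides $B$, a contradiction. Hence \emph{every} such $P$ satisfies $\Xi_j(B)$ — in particular the "finitely many exceptions" condition holds vacuously. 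Note that a priori $\beta_j(P)$ need only be defined when $P$ lies on a smooth fiber of $g_j$, which is exactly the case under consideration, so the argument applies to all relevant $P$.

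Applying Lemma~\ref{dense} with the index $j$, the curve $C$, and $x = B$ then yields that $W(K)$ is Zariski dense in $W$.

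The main obstacle is a bookkeeping one: keeping straight which of the two fibrations plays the role of "$i$" (the one whose fiber through $P_0$ is used to produce infinitely many points, forced by avoidance of $Z_i(B)$) versus "$j$" (the one to which Lemma~\ref{dense} is applied, and for which the horizontal curve $C$ arises from that fiber). The compatibility $Z = Z_0(B)\cup(Z_1(B)\cap Z_2(B))$ is engineered so that avoiding $Z$ guarantees both: avoiding $Z_i(B)$ for at least one $i$ (giving the infinite fiber via Lemma~\ref{fiberinfinite}), and avoiding $Z_0(B)$ (so that the component produced is genuinely horizontal for the other fibration and not swallowed by $\varphi^{-1}(T_j(B))$). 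One should double-check the edge case where the fiber of $g_i$ through $P_0$ is singular: then $P_0$ lies on a unique component $C'$, which is rational and defined over $K$, so $C'(K)$ is infinite, and since $P_0 \notin \varphi^{-1}(Z_0(B))$ this $C'$ is not contained in $\varphi^{-1}(S_1\cap S_2)$, hence is horizontal for $g_j$ and not inside $\varphi^{-1}(T_j(B))$ — so the same argument goes through.
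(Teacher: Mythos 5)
Your proof is correct and follows essentially the same route as the paper's own argument: choose $i$ with $P_0\notin\varphi^{-1}(Z_i(B))$, invoke Theorem~\ref{merel} so that Lemma~\ref{fiberinfinite} yields a component $C$ with $C(K)$ infinite, use $P_0\notin\varphi^{-1}(Z_0(B))$ to see that $C$ is horizontal for $g_j$ and not contained in $\varphi^{-1}(T_j(B))$, note that Merel makes the finiteness hypothesis of Lemma~\ref{dense} hold vacuously, and conclude. Your explicit treatment of the singular-fiber case and of the role of $\mathcal{C}(B)$ matches what the paper leaves implicit.
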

\begin{proof}
Suppose $P \in W(K)$ is a point outside $\varphi^{-1}(Z)$. Without loss of generality we assume that $P$ is not contained in $\varphi^{-1}(Z_1(B))$.  Let $F= g_i^{-1}(g_i(P))$ be the fiber of $g_i$ through $P$. Since no elliptic curve over $K$ has a $K$-point of order larger than $B$ by Theorem \ref{merel}, we conclude from Lemma \ref{fiberinfinite} that there is an irreducible component $C$ of $F$ for which $C(K)$ is infinite. From $\varphi(P) \not\in Z_0(B)$ we conclude that $\varphi(C)$ is not contained in $\mathcal{C}(B)$, so $C$ is a horizontal curve with respect to $g_2$ and $C$ is not contained in $\varphi^{-1}(T_2(B))$. 
Again by Theorem \ref{merel}, every point of $W(K)$ outside $\varphi^{-1}(S_2 \cup T_2(B))$ satisfies $\Xi_i(B)$, so 
by Lemma \ref{dense}, the set $W(K)$ is Zariski dense.
\end{proof}

\begin{proof}[Proof of Theorem \ref{mainone}]
Let $B$ be  as in \eqref{globalbound}.
Then by Proposition \ref{efficient} we may take $Z = Z_0(B) \cup (Z_1(B) \cap Z_2(B))$. 
\end{proof}

Recall that for any positive integer $N$, the curve $X_1(N)$
parametrizes pairs $(E,P)$, up to isomorphism over the algebraic closure of the ground field, 
of an elliptic curve $E$ and a point $P$ of order $N$.  The genus of $X_1(N)$ is at 
least $2$ for $N=13$ and $N \geq 16$ (see \cite[p. 109]{oggrat}). 
Let $\gamma_N \colon X_1(N) \to \AA_1(j)$ be the natural map to the $j$-line, sending $(E,P)$ to $j(E)$. 

\begin{lemma}\label{lemmaboundorderN}
Let $K$ be a field of characteristic zero with an element $j_0 \in K$. Let $N$ be a positive integer. 
Set $\mu(j_0)=4$ if $j_0=1728$, or $\mu(j_0)=6$ if $j_0=0$, or $\mu(j_0)=2$ otherwise. 
Let $E$ be an elliptic curve over $K$ with $j$-invariant $j_0$. Then the number of points in $E(K)$ of order $N$ is at most 
\begin{equation}\label{boundorderN}
\mu(j_0) \cdot \# \left( \gamma_N^{-1} (j_0) \cap X_1(N)(K) \right).
\end{equation}
\end{lemma}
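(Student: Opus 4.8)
The plan is to reduce the count of $K$-rational $N$-torsion points on $E$ to a count of $K$-points on the fiber $\gamma_N^{-1}(j_0)$, the extra factor $\mu(j_0)$ accounting for the automorphisms of $E$ that permute these points. First I would fix an elliptic curve $E_0/K$ with $j$-invariant $j_0$ and recall that any $E/K$ with the same $j$-invariant is a twist of $E_0$, classified by $H^1(K, \mathrm{Aut}(\bar E_0))$, with $\#\mathrm{Aut}(\bar E_0) = \mu(j_0)$ (which is $4$, $6$, or $2$ exactly in the three cases listed, in characteristic zero). Now consider the set $P_N(E) = \{\, Q \in E(K) : Q \text{ has order } N \,\}$ whose cardinality we must bound. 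Each such $Q$ gives a pair $(E,Q)$, and its image in $X_1(N)$ under the moduli map is a $K$-point lying over $j_0$, i.e.\ a point of $\gamma_N^{-1}(j_0) \cap X_1(N)(K)$; this defines a map
$$
\Phi \colon P_N(E) \longrightarrow \gamma_N^{-1}(j_0) \cap X_1(N)(K).
$$

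The key step is to bound the fibers of $\Phi$. Two $K$-points $Q, Q'$ of order $N$ on $E$ map to the same point of $X_1(N)$ precisely when $(E,Q)$ and $(E,Q')$ are isomorphic over $\bar K$, i.e.\ when there is an automorphism $\sigma \in \mathrm{Aut}(\bar E)$ with $\sigma(Q) = Q'$. Since $\mathrm{Aut}(\bar E)$ has order $\mu(j_0)$, each fiber of $\Phi$ has at most $\mu(j_0)$ elements, giving
$$
\# P_N(E) \leq \mu(j_0) \cdot \#\big(\gamma_N^{-1}(j_0) \cap X_1(N)(K)\big),
$$
which is exactly \eqref{boundorderN}. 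One subtlety to handle carefully is that $X_1(N)$ as a coarse moduli space represents $\bar K$-isomorphism classes, so a $K$-point of $X_1(N)$ need not come from a pair defined over $K$; but here we do not need the converse direction — we only use that each genuine $K$-pair $(E,Q)$ produces a $K$-point of $X_1(N)$ over $j_0$, which follows from Galois-equivariance of the moduli map, and this suffices for the upper bound. For $N \leq 2$ the notion of "order $N$" and the points at which $\mu(j_0)$ jumps require a direct check, but these small cases are immediate.

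The main obstacle I expect is bookkeeping around the coarse-vs-fine moduli distinction and making the fiber-counting argument airtight: one must verify that the map $\Phi$ is well defined (that $(E,Q) \mapsto j_0$ under $\gamma_N$, with the $K$-rationality of the image point coming from Galois invariance of $Q$ and $E$), and that the fiber over a point in the image is literally an orbit of $\mathrm{Aut}(\bar E)$ intersected with $E(K)$ — hence of size at most $\mu(j_0)$. No hard input is needed beyond the elementary theory of twists of elliptic curves and the modular interpretation of $X_1(N)$; the genus remark about $X_1(N)$ preceding the lemma is not used in its proof but sets up the later application, where finiteness of $\gamma_N^{-1}(j_0) \cap X_1(N)(K)$ for the relevant $N$ will be invoked.
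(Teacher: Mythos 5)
Your proposal is correct and follows essentially the same argument as the paper: send each $K$-rational point of order $N$ to the corresponding point of $X_1(N)(K)$ above $j_0$ and bound the fibers of this map by the order $\mu(j_0)$ of the geometric automorphism group of $E$. The extra remarks about twists and the coarse-moduli subtlety are harmless elaborations of what the paper leaves implicit.
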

\begin{proof}
Each point $P \in E(K)$ of order $N$ determines a point on $X_1(N)$ corresponding to the pair $(E,P)$, which maps under $\gamma_N$ to $j_0$. Two points $P,P' \in E(K)$ determine the same point on $X_1(N)$ if and only if there is an automorphism of $E$ that sends $P$ to $P'$.  As $E$ has only $\mu(j_0)$ automorphisms over $\overline{K}$, there are at most $\mu(j_0)$ points in $E(K)$ that determine a given point on $X_1(N)$. The lemma follows. 
\end{proof}

\begin{definition}
For any number field $K$ of degree $d$ over $\QQ$ we set 
$$
n_K = 2 \sum_{N=16}^B\left( \# X_1(N)(K) + \#\big(\gamma_N^{-1}(1728) \cap X_1(N)(K) \big) +
                                                                               2 \#\big(\gamma_N^{-1}(0) \cap X_1(N)(K) \big) \right),
$$
with $B$ as in \eqref{globalbound}.
\end{definition}

Note that $n_K$ is well defined for every number field $K$, as $X_1(N)(K)$ is finite for all $N \geq 16$ by Faltings' Theorem \cite{faltings}. Note also that $n_K$ equals the sum of \eqref{boundorderN} over all $j_0 \in K$ and all $N \in \{16, \ldots, B\}$. 

For $i=1,2$, let the $j$-maps $j_i \colon C_i \to \PP^1$ and their  ``degree" $d_i$ be as in Section \ref{settings}.

\begin{lemma}\label{limitedpoints}
Suppose $i \in \{1,2\}$ and let $K$ be any field extension of $k$.
Then there are at most $d_iM_in_K$ points in $W(K)$ outside $\varphi^{-1}(S_i)$ that do not satisfy $\Xi_i(15)$. 
\end{lemma}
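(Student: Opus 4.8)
The plan is to reduce the statement to a counting problem on $X_1(N)$ and then invoke Lemma \ref{lemmaboundorderN}. First I would observe that a point $P \in W(K)$ outside $\varphi^{-1}(S_i)$ fails to satisfy $\Xi_i(15)$ precisely when the fiber $F = g_i^{-1}(g_i(P))$ is smooth and the divisor class $(\beta_i(P)) - (P)$ has some order $r > 15$ in the Jacobian $J(F)$; since this order is bounded by $B$ via Theorem \ref{merel}, the relevant orders lie in $\{16, \ldots, B\}$, and in fact by the genus bounds on $X_1(N)$ the only $N$ in this range with $X_1(N)(K)$ possibly infinite are the finitely many $N \le 15$, which are excluded. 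As everything is geometric we may, as in the proof of Lemma \ref{fiberinfinite}, replace $(W,g_1,g_2,\beta_1,\beta_2)$ by $(V,f_1,f_2,\alpha_1,\alpha_2)$ and count points of $V(K)$ instead.

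Next I would stratify the count by the value $j_0 = j_i(f_i(P)) \in \PP^1$ and by the order $N$ of the divisor class. For a fixed smooth fiber $F$ over a point $t \in C_i(K)$ with $j$-invariant $j_0 \in K$, Lemma \ref{lemmaboundorderN} bounds the number of elements of $J(F)(K)$ of order $N$ by $\mu(j_0) \cdot \#(\gamma_N^{-1}(j_0) \cap X_1(N)(K))$. The map $P \mapsto (\alpha_i(P)) - (P)$ from $F$ to its Jacobian has degree $M_i$, so each such torsion element is hit by at most $M_i$ points $P$ on $F$. Hence the number of bad points lying on fibers over a fixed $t$ is at most $M_i \sum_{N=16}^B \mu(j_0)\cdot\#(\gamma_N^{-1}(j_0)\cap X_1(N)(K))$.

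Then I would sum over the fibers. For a given value $j_0$ assumed by the $j$-map $j_i$, there are at most $d_i$ points $t \in C_i$ with $j_i(t) = j_0$ (this is where the degree $d_i$ of the $j$-map enters; if $j_i$ were constant the argument would break, which is why that hypothesis is needed in Theorem \ref{maintwo}). Summing the per-fiber bound over these at most $d_i$ values of $t$ and then over all $j_0 \in K$ gives a total bound of $d_i M_i \sum_{j_0 \in K} \sum_{N=16}^B \mu(j_0)\cdot\#(\gamma_N^{-1}(j_0)\cap X_1(N)(K))$. Finally I would identify the double sum with $n_K$: the term for $j_0 = 1728$ contributes the factor $\mu=4$, the term for $j_0 = 0$ the factor $\mu=6$, and all other $j_0$ the factor $\mu=2$, and since $\sum_{j_0}\#(\gamma_N^{-1}(j_0)\cap X_1(N)(K)) = \#X_1(N)(K)$, this matches exactly the defining expression $n_K = 2\sum_{N=16}^B(\#X_1(N)(K) + \#(\gamma_N^{-1}(1728)\cap X_1(N)(K)) + 2\#(\gamma_N^{-1}(0)\cap X_1(N)(K)))$ as remarked just after its definition. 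This yields the desired bound $d_i M_i n_K$.

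The main obstacle I anticipate is bookkeeping at the boundary cases rather than anything deep: making sure that points on singular fibers are correctly excluded (handled by removing $\varphi^{-1}(S_i)$), that the passage to the generic-fiber / geometric setting is legitimate for a \emph{counting} statement and not just an existence statement, and that the degree-$M_i$ fibers of $P \mapsto (\alpha_i(P))-(P)$ are counted with the right multiplicity over fibers $F$ without a rational point — though since a bad point $P$ is itself a $K$-point of $F$, the fiber does have a $K$-point and is isomorphic to its Jacobian over $K$, so this last subtlety evaporates exactly as in Lemma \ref{fiberinfinite}.
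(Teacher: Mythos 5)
Your argument follows the paper's proof essentially step for step: stratify the bad points by the order $N\in\{16,\dots,B\}$ of the divisor class, by the base point $t$ of the fiber, and by the torsion point $Q$ hit; bound the inner count by the degree $M_i$ of $P\mapsto(\beta_i(P))-(P)$; bound the number of torsion points of order $N$ by Lemma \ref{lemmaboundorderN}; group the $t$'s by $j$-invariant with multiplicity at most $d_i$; and recognize the resulting double sum as $n_K$. All of that is correct and is exactly what the paper does.

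The one step that does not hold up is the opening reduction ``as everything is geometric we may replace $(W,g_1,g_2,\beta_1,\beta_2)$ by $(V,f_1,f_2,\alpha_1,\alpha_2)$ and count points of $V(K)$ instead.'' Counting $K$-rational points on a twist is emphatically \emph{not} a geometric statement: $\varphi$ is only defined over $\kbar$, and $W(K)$ and $V(K)$ are in general unrelated sets (that is the whole point of considering twists). The reduction in Lemma \ref{fiberinfinite} was legitimate only because the assertion there --- the order of a fixed divisor class on a single geometric fiber --- is invariant under base change to $\kbar$. Fortunately your reduction is also unnecessary: the only inputs you take from $V$ are the degree $M_i$ of the fiberwise map to the Jacobian and the degree $d_i$ of the $j$-map, and both \emph{are} geometric invariants, so they transfer to $g_i$ and $\beta_i$ via $\varphi$ and $\psi_i$; this is how the paper proceeds, working throughout with $D_i$ and the induced $j$-map $j_i'=j_i\circ\psi_i$ of degree $d_i$. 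Delete the reduction and run your count directly on $W$. A second, minor omission: the lemma does not assume $j_i$ nonconstant, so you should dispose of the degenerate cases first --- if $n_K=0$ then $X_1(N)(K)=\emptyset$ for all $N\geq 16$ and there are no bad points at all (by Theorem \ref{merel} for $N>B$), and if $n_K>0$ and $d_i=\infty$ the bound is vacuous --- exactly as the paper does in the first lines of its proof.
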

\begin{proof}
Let $d$ be the degree of $K$ over $\QQ$ and let $B$ be as in \eqref{globalbound}. 
We know $X_1(N)(K)$ is empty for $N>B$ by Theorem \ref{merel}.
If we have $n_K=0$, then $X_1(N)(K)$ is empty for all $N>15$, so every point in $W(K)$ outside $\varphi^{-1}(S_i)$ 
satisfies $\Xi_i(15)$ and we are done, no matter what $d_iM_in_K$ is defined to be for $d_i = \infty$. Assume $n_K>0$. 
If $d_i = \infty$, then we are done, so we also assume 
$d_i < \infty$. Then the $j$-map $j_i \colon C_i$ and the induced $j$-map $j_i'  = j_i \circ \psi_i \colon D_i \to \PP^1$ are nonconstant of degree $d_i$. 
Let $\Gamma$ denote the set of all points $P\in W(K)$ that are not contained in 
$\varphi^{-1}(S_i)$ and that do not satisfy $\Xi_i(15)$.
Every point $P\in \Gamma$ lies on the smooth fiber $F=g_i^{-1}(t)$ above some $t\in D_i(K)$, where the divisor class of $(\beta_i(P)) - (P)$ has finite order $N$ in the Jacobian $J(F)$ of $F$ for some $N\geq 16$;  by Theorem \ref{merel} we have $N \leq B$. 
Summing over all $N \in \{16,\ldots, B\}$, over all $t\in D_i(K)$, and all points $Q$ of $J(g_i^{-1}(t))$ of order $N$ we find 
$$
\#\Gamma = \sum_{N=16}^B \mathop{{\sum}'}_{t \in D_i(K)} \mathop{\sum_{Q \in J(g_i^{-1}(t))}}_{{\rm order }\, Q=N}
\#\{P \in g_i^{-1}(t) \,\, : \,\, [(\beta_i(P))-(P)] = Q \},
$$
where the restricted sum is only over those $t\in D_i(K)$ for which $g_i^{-1}(t)$ is smooth. 
The summand is bounded by the degree of the map $F \to J(F), \, P \mapsto (P) - (\beta_i(P))$, with $F = g_i^{-1}(t)$, which equals the degree of the analogous map from the generic fiber of $g_i$ to its Jacobian; this generic map is geometrically equivalent to the map $\chi_i \colon \gV_i \to J(\gV_i)$, so the degree in question is $M_i$. 
By Lemma \ref{lemmaboundorderN}  the number of terms of the inner sum is bounded by \eqref{boundorderN} with $j_0=j(g_i^{-1}(t))=j_i'(t)$. For any $j_0\in K$ there are at most $d_i$ points $t\in D_i(K)$ with $j_i'(t)=j_0$, so grouping the points $t \in C_i(K)$ according to $j$-invariant, we find
$$
\#\Gamma \leq d_iM_i \sum_{N=16}^B  \sum_{j_0 \in K}  \mu(j_0) \cdot  \# \left( \gamma_N^{-1} (j_0) \cap X_1(N)(K) \right) = 
d_iM_in_K.
$$
\end{proof}


\begin{proposition}\label{notefficient}
Suppose the $j$-maps $j_1$ and $j_2$ are nonconstant.
Let $K$ be a finite extension of $k$. If $W(K)$ contains more than $n_K\cdot \min(d_1M_1,d_2M_2)$  points outside $\varphi^{-1}(Z)$ for $Z = Z_0(15) \cup Z_1(15) \cup Z_2(15)$, then $W(K)$ is dense in $W$. 
\end{proposition}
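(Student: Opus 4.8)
The plan is to mimic the proof of Proposition \ref{efficient}, but replacing the appeal to Theorem \ref{merel} (which bounds all torsion uniformly by $B$) with the more delicate finiteness estimate of Lemma \ref{limitedpoints}, which only controls points of order exceeding $15$ up to a bounded number. So suppose $W(K)$ contains a set $\Sigma$ of more than $n_K \cdot \min(d_1M_1, d_2M_2)$ points outside $\varphi^{-1}(Z)$, where $Z = Z_0(15) \cup Z_1(15) \cup Z_2(15)$. Without loss of generality assume $d_1M_1 \le d_2M_2$, so $\#\Sigma > n_K d_1 M_1$. By Lemma \ref{limitedpoints} applied with $i=1$, at most $d_1 M_1 n_K$ points of $W(K)$ lying outside $\varphi^{-1}(S_1)$ fail to satisfy $\Xi_1(15)$; since $Z \supseteq Z_1(15) \supseteq S_1$-singular-points and in any case $\Sigma$ avoids $\varphi^{-1}(Z_1(15))$, the points of $\Sigma$ not on a singular fiber of $g_1$ all lie outside $\varphi^{-1}(S_1)$, so all but at most $d_1M_1 n_K$ of them satisfy $\Xi_1(15)$. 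Hence there is at least one point $P \in \Sigma$ with $P$ outside $\varphi^{-1}(Z)$ and satisfying $\Xi_1(15)$ — unless all the ``bad'' points happen to lie on singular fibers of $g_1$, a case I address below.

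First suppose such a $P$ exists. Let $F = g_1^{-1}(g_1(P))$. Since $P \notin \varphi^{-1}(Z_1(15))$ and $P$ satisfies $\Xi_1(15)$, part (2) of Lemma \ref{fiberinfinite} (with $x=15$) gives that $F(K)$ is infinite; more precisely, since $F$ is smooth here, $F$ itself is a horizontal curve for $g_2$ (the two fibrations are different). But $P \notin \varphi^{-1}(Z_0(15))$ means $\varphi(F)$ is not one of the finitely many components collected in $\mathcal{C}(15)$ — well, $F$ is a whole smooth fiber of $g_1$, which by the note after the definition of $\mathcal C(x)$ cannot lie in $T_1(15)$ and, not being contained in a singular fiber of $g_2$ a priori, we must rule out $\varphi(F) \subseteq T_2(15)$: this is exactly the content of $P \notin \varphi^{-1}(T_2(15)) \subseteq \varphi^{-1}(Z_2(15)) \subseteq \varphi^{-1}(Z)$. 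So $C := F$ is a horizontal curve for $g_2$ with $C(K)$ infinite and $C \not\subseteq \varphi^{-1}(T_2(15))$. Now apply Lemma \ref{dense} with $i=2$, $x=15$: its hypothesis ``only finitely many points of $W(K)$ outside $\varphi^{-1}(S_2 \cup T_2(15))$ fail $\Xi_2(15)$'' follows from Lemma \ref{limitedpoints} with $i=2$ (at most $d_2M_2 n_K < \infty$ such points). Hence $W(K)$ is Zariski dense in $W$, as desired.

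It remains to handle the possibility that every point of $\Sigma$ failing $\Xi_1(15)$ — together with every point of $\Sigma$ on a singular fiber of $g_1$ — exhausts the whole of $\Sigma$; but there are only finitely many singular fibers of $g_1$, and by Lemma \ref{fiberinfinite}(1) each one either forces $C(K)$ infinite for a component $C$ through a point of $\Sigma$ outside $\varphi^{-1}(Z_1(15))$, or contributes boundedly. Concretely: if some $P \in \Sigma$ lies on a singular fiber $F$ of $g_1$, then since $P \notin \varphi^{-1}(Z_1(15))$ it lies outside the singular locus, so by Lemma \ref{fiberinfinite}(1) the component $C \ni P$ has $C(K)$ infinite; and $P \notin \varphi^{-1}(Z_0(15))$ forces $\varphi(C) \notin \mathcal{C}(15)$, hence $C$ is horizontal for $g_2$ and $C \not\subseteq \varphi^{-1}(T_2(15))$, so Lemma \ref{dense} (with $i=2$, $x=15$) again yields density. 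Thus in every case $W(K)$ is Zariski dense. The only genuinely delicate point is bookkeeping the three flavours of ``bad'' locus ($S_1$ singular points, $\varphi^{-1}(T_1(15))$, and $\varphi^{-1}(T_2(15))$) so that $Z = Z_0(15) \cup Z_1(15) \cup Z_2(15)$ is large enough for all the cited lemmas to apply but, via the pigeonhole count $\#\Sigma > n_K \min(d_1M_1, d_2M_2)$ and Lemma \ref{limitedpoints}, still leaves a usable point; I expect that verifying the containments $S_i$-singular $\subseteq Z_i(15)$, $T_i(15) \subseteq Z_i(15)$, and the component analysis $\mathcal C(15)$-vs-$Z_0(15)$ line up exactly is the main (though routine) obstacle.
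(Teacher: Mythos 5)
Your proposal is correct and follows essentially the same route as the paper: pigeonhole via Lemma \ref{limitedpoints} to find a point $P$ of $\Sigma$ that either satisfies $\Xi_1(15)$ or lies on a singular fiber of $g_1$, extract from Lemma \ref{fiberinfinite} a fiber component $C$ through $P$ with $C(K)$ infinite, use $\varphi(P)\notin Z_0(15)$ (together with $P\notin\varphi^{-1}(T_2(15))$) to see that $C$ is horizontal for $g_2$ and not contained in $\varphi^{-1}(T_2(15))$, and conclude with Lemma \ref{dense} for $i=2$, whose finiteness hypothesis is again Lemma \ref{limitedpoints}. The only difference is presentational: you treat the smooth-fiber and singular-fiber cases separately, whereas the paper handles them uniformly through Lemma \ref{fiberinfinite}; the containments you flag as needing verification do all check out directly from the definitions.
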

\begin{proof}
Without loss of generality we assume $d_1M_1\leq d_2M_2 < \infty$.  Suppose $W(K)$ contains more than $n_Kd_1M_1$ points outside $\varphi^{-1}(Z) \supset \varphi^{-1}(Z_1(15))$. Then by Lemma \ref{limitedpoints} there is such a point $P$ that either is contained in a singular fiber of $g_i$ or that satisfies $\Xi_i(15)$.  In either case Lemma \ref{fiberinfinite} implies that there is an 
irreducible component  $C$ of the fiber of $g_1$ through $P$ with $P\in C(K)$ for which $C(K)$ is infinite. 
From $\varphi(P) \not \in Z_0(15)$ we conclude $\varphi(C) \not \in \mathcal{C}(15)$, so $C$ is a horizontal curve with respect to $g_2$ and $C$ is not contained in $\varphi^{-1}(T_2(15))$.   By Lemma \ref{limitedpoints} only finitely many points $P \in W(K)$ outside 
$\varphi^{-1}(S_2)$ do not satisfy $\Xi_i(15)$, so by Lemma \ref{dense} the set $W(K)$ is dense in $W$.  
\end{proof}

\begin{proof}[Proof of Theorem \ref{maintwo}]
Then by Proposition \ref{notefficient} we may take $Z = Z_0(15) \cup Z_1(15) \cup Z_2(15)$. 
\end{proof}

The following proposition shows that we can take the set $Z$ much smaller, as long as we require the existence of more $K$-rational points outside $\varphi^{-1}(Z)$.

\begin{proposition}\label{efficienttwo}
Let $K$ be a finite extension of $k$. If $W(K)$ contains more than $n_K(d_1M_1+d_2M_2)$  points outside $\varphi^{-1}(Z)$ for $Z = Z_0(15) \cup (Z_1(15) \cap Z_2(15))$, then $W(K)$ is dense in $W$. 
\end{proposition}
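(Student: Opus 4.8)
The plan is to combine the argument of Proposition \ref{notefficient} with the refinement used in Proposition \ref{efficient}, so that we only need to pay the cost of one fibration rather than both. Suppose $W(K)$ contains more than $n_K(d_1M_1+d_2M_2)$ points outside $\varphi^{-1}(Z)$, where now $Z = Z_0(15) \cup (Z_1(15) \cap Z_2(15))$. The key observation is that $Z_1(15) \cap Z_2(15)$ is much smaller than $Z_1(15) \cup Z_2(15)$: a point outside $\varphi^{-1}(Z_1(15) \cap Z_2(15))$ lies outside at least one of $\varphi^{-1}(Z_1(15))$ or $\varphi^{-1}(Z_2(15))$. We partition the more than $n_K(d_1M_1+d_2M_2)$ points of $W(K) \setminus \varphi^{-1}(Z)$ into those lying outside $\varphi^{-1}(Z_1(15))$ and those lying outside $\varphi^{-1}(Z_2(15))$. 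By pigeonhole, for some $i \in \{1,2\}$ there are more than $n_K d_i M_i$ points of $W(K)$ outside $\varphi^{-1}(Z_i(15))$; fix this $i$ and let $\{i,j\}=\{1,2\}$.

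First I would apply Lemma \ref{limitedpoints}: among those more than $n_K d_i M_i$ points outside $\varphi^{-1}(Z_i(15)) \supset \varphi^{-1}(S_i)$, at most $d_i M_i n_K$ fail to satisfy $\Xi_i(15)$, so at least one such point $P$ either lies on a singular fiber of $g_i$ (and is outside $\varphi^{-1}(Z_i(15))$, hence at a smooth point of that fiber) or satisfies $\Xi_i(15)$. In either case Lemma \ref{fiberinfinite} applies — using that $P \notin \varphi^{-1}(Z_i(15))$ precisely gives the hypothesis of that lemma with $x=15$ — and yields an irreducible component $C$ of the $g_i$-fiber through $P$ with $P \in C(K)$ and $C(K)$ infinite. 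Since $\varphi(P) \notin Z_0(15)$, the component $\varphi(C)$ is not in $\mathcal{C}(15)$, so $C$ is horizontal with respect to $g_j$ and not contained in $\varphi^{-1}(T_j(15))$.

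Next I would feed this into Lemma \ref{dense} applied to the index $j$: by Lemma \ref{limitedpoints} only finitely many points of $W(K)$ outside $\varphi^{-1}(S_j \cup T_j(15))$ fail to satisfy $\Xi_j(15)$, and we have just produced a horizontal curve $C$ with respect to $g_j$ that avoids $\varphi^{-1}(T_j(15))$ and has $C(K)$ infinite, so Lemma \ref{dense} concludes that $W(K)$ is Zariski dense in $W$.

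I expect the only genuine subtlety — and it is minor — to be the bookkeeping around which of $Z_1(15)$, $Z_2(15)$ the pigeonhole step selects, and making sure the singular-fiber caveat (that $\varphi^{-1}(T_i(x))$ and $\varphi^{-1}(Z_i(x))$ differ only inside singular fibers, already used in the proofs of Lemmas \ref{fiberinfinite} and \ref{dense}) is correctly invoked so that the point $P$ produced genuinely satisfies the hypotheses of Lemma \ref{fiberinfinite}; everything else is a direct transcription of the arguments in Propositions \ref{efficient} and \ref{notefficient}.
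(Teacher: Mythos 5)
Your argument is correct and matches the paper's proof essentially step for step: the same pigeonhole splitting the more than $n_K(d_1M_1+d_2M_2)$ points between the two fibrations, then Lemma \ref{limitedpoints}, Lemma \ref{fiberinfinite}, the $Z_0(15)$ horizontality observation, and finally Lemma \ref{dense}. One small slip: the containment $\varphi^{-1}(Z_i(15)) \supset \varphi^{-1}(S_i)$ you assert is false, since $Z_i(15)$ contains only the \emph{singular points} of singular fibers rather than the fibers themselves, but your argument never actually relies on it because you correctly handle the points lying on singular fibers as a separate case.
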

\begin{proof}
Suppose $W(K)$ contains more than $n_K(d_1M_1+d_2M_2)$ points outside $\varphi^{-1}(Z)$. 
Then we have $d_iM_i < \infty$ for $i=1,2$,
and $W(K)$ contains either more than $d_1M_1n_K$ points outside $Z_0(15) \cup Z_1(15)$ or more than 
$d_2M_2n_K$ points outside $Z_0(15) \cup Z_2(15)$. Without loss of generality we assume the former case holds. 
Then by Lemma \ref{limitedpoints} there is such a point $P$ that either is contained in a singular fiber of $g_1$ or that satisfies $\Xi_i(15)$.  In either case Lemma \ref{fiberinfinite} implies that there is an 
irreducible component  $C$ of the fiber of $g_1$ through $P$ with $P\in C(K)$ for which $C(K)$ is infinite. 
From $\varphi(P) \not \in Z_0(15)$ we conclude $\varphi(C) \not \in \mathcal{C}(15)$, so $C$ is a horizontal curve with respect to $g_2$ and $C$ is not contained in $\varphi^{-1}(T_2(15))$.   By Lemma \ref{limitedpoints} only finitely many points $P \in W(K)$ outside 
$\varphi^{-1}(S_2)$ do not satisfy $\Xi_i(15)$, so by Lemma \ref{dense} the set $W(K)$ is dense in $W$.  
\end{proof}

%
%

\small
\nocite{*}
\bibliography{two_fibrations}
\bibliographystyle{plain}

\end{document}